\newtheorem{thm}{Theorem}[section]
\newtheorem{lem}[thm]{Lemma}
\newtheorem{prop}[thm]{Proposition}
\newtheorem{cor}[thm]{Corollary}
\newtheorem{defn}[thm]{Definition}
\newtheorem{rem}[thm]{Remark}
\newtheorem{example}[thm]{Example}
\begin{document}
\title[A Generalization of Injective\\ and Projective Complexes ]
{New version of A Generalization of Injective and Projective Complexes }
\author{Tah\.{i}re \" Ozen}
\author{Em\.{i}ne Y{\i}ld{\i}r{\i}m}
\address{Department of Mathematics, Abant \.{I}zzet Baysal University
\newline
G\"olk\"oy Kamp\"us\"u Bolu, Turkey} \email{ozen\_t@ibu.edu.tr}
\email{emineyyildirim@gmail.com}
\date{\today}
\subjclass{18G35} \keywords{Injective (Projective) Complex, Precover, Preenvelope.}
\pagenumbering{arabic}
\begin{abstract} Let $\mathcal{X}$ be a class of $R$-modules. In this paper, we investigate  \;$\mathcal{X}$-(f.g.)injective ((f.g.)projective) and DG-$\mathcal{X}$-injective (projective) complexes which are generalizations of injective (projective) and DG-injective (projective) complexes. We prove that some known results can be extended to the class of \;$\mathcal{X}$-(f.g.)injective ((f.g.)projective) and DG-$\mathcal{X}$-injective (projective) complexes for this general settings.
\end{abstract}
\maketitle\section{Introduction}
Throughout this paper, $R$ denotes an associative ring with identity and all modules are unitary. Let $\mathcal{X}$ be a class of R-modules. An $R$-module $E$ is called $\mathcal{X}$-injective (see \cite{1}), if $Ext^{1}(B/A,E)=0$\; for every module $B/A \in \mathcal{X}$ or equivalently if $E$ is injective with respect to every exact sequence \;$0\rightarrow A\rightarrow B\rightarrow B/A\rightarrow 0$ where $B/A \in \mathcal{X}$. Dually it can be defined an \;$\mathcal{X}$-projective module. In section 2, we define \;$\mathcal{X}$-(f.g.)injective, $\mathcal{X}$-(f.g.)projective, DG-$\mathcal{X}$-injective and  DG-$\mathcal{X}$-projective complexes which are generalizations of injective, projective, DG-injective and DG-projective complexes (see \cite{2}) where f.g. abbreviates finitely generated. By \cite{2} we know that $(\varepsilon$, DG-injective) is a hereditary cotorsion pair and this cotorsion pair has enough injectives and enough projectives where $\varepsilon$ is a class of all exact complexes. We give some sufficient conditions that \;$(\varepsilon_{\mathcal{X}}$,DG-$\mathcal{X}$-injective) is also a hereditary cotorsion pair where $\varepsilon_{\mathcal{X}}$ is a class of all exact complexes whose kernels  are in $\mathcal{X}$ and what  \cite{5} denotes \;$\widetilde{\mathcal{X}}$ and  calls \;$\mathcal{X}$-complexes. We prove that \;${\varepsilon_{\mathcal{X}}}^{\bot}(^{\bot}\varepsilon_{\mathcal{X}}) =$
DG-$\mathcal{X}$-injective(projective) if $\mathcal{X}$ is extension closed and moreover if $\mathcal{X}$ is the class of finitely presented modules and $(\mathcal{X},\mathcal{X^{\bot}})$ is a cotorsion pair, then $(\varepsilon_{\mathcal{X}}$,DG-$\mathcal{X}$-injective) is a cotorsion pair and at the same time if $\mathcal{X}$ is closed under taking kernels of epics, then $DG-\mathcal{X}-injective \cap \varepsilon=\varepsilon_{\mathcal{X}-injective}\subseteq C(\mathcal{X}-f.g.injective)$.

In the last section, we will investigate when a complex has an exact C($\mathcal{X}$-(f.g.)projective((f.g.)injective))-precover(preenvelope). We know that an injective (projective) complex is exact. Moreover we give some conditions that an $\mathcal{X}$-(f.g.)injective ((f.g.)projective) complex is exact. Since every complex has an injective and projective resolution, we can compute the right derived functors $Ext^{i}(X,Y)$\; of $Hom(-,-)$\; where \;$Hom(X,Y)$ is the set of all chain maps from X to Y. Moreover\\
$\mathcal{H}om(X,Y)$ is the complex defined by $\mathcal{H}om(X,Y)_{n}=\prod_{p+q=n}Hom($\\$X_{-p},Y_{q})$. (See for more details and the other definitions \cite{2},\cite{4},\cite{3}).
\section{$\mathcal{X}$-injective and $\mathcal{X}$-projective complexes}
We begin with giving generalized definitions.
\begin{defn} \label{1.1} Let $\mathcal{X}$ be a class of R-modules. A
complex $\mathcal{C}: \ldots \longrightarrow C^{n-1} \longrightarrow
C^{n} \longrightarrow C^{n+1} \longrightarrow \ldots$ is called an
$\mathcal{X^{*}}-(cochain)$ complex, if  $C^i \in \mathcal{X}$ for all
$i \in \mathbb{Z}$. A complex $\mathcal{C}: \ldots \longrightarrow
C_{n+1} \longrightarrow C_{n} \longrightarrow C_{n-1}
\longrightarrow \ldots$ is called an $\mathcal{X^{*}}-(chain)$ complex,
if $C_i \in \mathcal{X}$ for all $i\in \mathbb{Z}$. The class of all
$\mathcal{X^{*}}-complexes$ is denoted by
$C(\mathcal{X^{*}})$.
\end{defn}
We recall that a complex $I$ is a finitely generated complex if every $I^{n}$ 
is a finitely generated module and $I$ is a bounded complex. Then we can give the following definition.
\begin{defn} \label{1.3} A complex $\mathcal{C}$ is called an $\mathcal{X}$-(f.g.)injective complex, if $Ext^{1}(Y/X,C)= 0$ for every (f.g.)complex $Y/X \in C(\mathcal{X^{*}})$. Equivalently,  a complex $\mathcal{C}$ is an $\mathcal{X}$-(f.g.)injective complex if for any exact sequence $0\rightarrow X\rightarrow Y\rightarrow Y/X\rightarrow0$ with a (f.g.)complex $Y/X 
\in C(\mathcal{X^{*}})$, the sequence $Hom(Y,C)\rightarrow Hom(X,C)\rightarrow 0$ is exact.

Dually we can define an $\mathcal{X}$-(f.g.)projective complex. A complex \;$\mathcal{C}$ is called an $\mathcal{X}$-(f.g.)projective complex, if $Ext^{1}(C,X) = 0$ for every (f.g.) complex $X \in \mathcal{C(X^{*})}$, or equivalently a complex \;$\mathcal{C}$ is an $\mathcal{X}$-(f.g.)projective complex if for any exact sequence $0\rightarrow X\rightarrow A\rightarrow B\rightarrow0$ with a (f.g.)complex $X \in \mathcal{C(X^{*})}$, the sequence $Hom(C,A)\rightarrow Hom(C,B)\rightarrow 0$ is exact.
We denote the class of all \;$\mathcal{X}$-(f.g.)injective\\
(f.g.)projective) complexes by \;C($\mathcal{X}$-(f.g.)injective((f.g.)projective)).
\end{defn}

\begin{defn} \label{1.15} Let $\varepsilon$ be the class of exact complexes. Then
we can define $\varepsilon_{\mathcal{X}}$ such that $\varepsilon_{\mathcal{X}}$ is the
class of  exact complexes with kernels in $\mathcal{X}$.
\end{defn}

\begin{example} \label{1.4} If $P$ is an $\mathcal{X}-projective(\mathcal{X}-injective)$ module, then
$\overline{P}:... \longrightarrow 0 \longrightarrow P
\longrightarrow P \longrightarrow 0 \longrightarrow 0
\longrightarrow ...$ is an
$\mathcal{X}-(f.g.)projective(\mathcal{X}-(f.g.)injective)$ complex. Moreover any direct sum (product) of $\mathcal{X}-(f.g.)projective$ $(\mathcal{X}-(f.g.)injective)$ complexes is again $\mathcal{X}-(f.g.)projective(\mathcal{X}-(f.g.)injective)$. Since C($\mathcal{X}$-(f.g.)injec-
tive((f.g.)projective)) is closed under extensions, every bounded exact complex $Y:...0\rightarrow Y^{0}\rightarrow...\rightarrow Y^{n}\rightarrow 0...$ with kernels $\mathcal{X}$-injective(pro-
jective) module is in C($\mathcal{X}$-(f.g.)injective((f.g.)projective)).
  
Since every left (right) bounded exact complex with kernels $\mathcal{X}$-injecti-
ve(projective) module is an inverse (direct)limit of bounded exact complexes with kernels $\mathcal{X}$-(f.g.)injective(projective) module, then every left (right) bounded exact complex with kernels $\mathcal{X}$-injective(projective) module is in C($\mathcal{X}$-(f.g.)injective((f.g.)projective)).

 If $\mathcal{X}$ is a class of finitely presented modules, then since every exact complex with kernels $\mathcal{X}$-injective module is a direct limit of left bounded exact complex with kernels $\mathcal{X}$-injective module, every exact complex with kernels $\mathcal{X}$-injective module is in C($\mathcal{X}$-f.g. injective), that is $\varepsilon_{\mathcal{X}-injective}\subseteq C(\mathcal{X}-f.g. injective)$.
 
 Moreover if $\mathcal{X}-injective\subseteq \mathcal{X}$, then every $\epsilon_{\mathcal{X}-injective}$ complex is a direct sum of injective complexes the same as injective complexes and similarly if $\mathcal{X}-projective\subseteq \mathcal{X}$, then every $\epsilon_{\mathcal{X}-projective}$ complexes is a direct sum of projective complexes. Then 
$\varepsilon_{\mathcal{X}-injective(projective)}\subseteq C(\mathcal{X}-injective(projective))$.
\end{example}
 Notice that if $P$ is an
$\mathcal{X}$-injective($\mathcal{X}$-projective) module and $P$ is
not in the class $\mathcal{X}$, then $\overline{P}$ is an
$\mathcal{X}$-(f.g.)injective complex, but not an $\mathcal{X^{*}}$-complex.
So $\mathcal{X}$-(f.g.)injective ((f.g.)projective) complex may not be an
$\mathcal{X^{*}}$-complex.
 \begin{lem} \label{1.27} Let $X$ be an $\mathcal{X}-injective$ complex such that \;$\frac{E(X)}{X} \in C(\mathcal{X}^{*})$\; (or $\frac{Y}{X} \in C(\mathcal{X}^{*})$) where $E(X)$ is an injective
envelope of $X$. Then $X=E(X)$ and so it is an injective complex.(X is a
direct summand of Y).
\end{lem}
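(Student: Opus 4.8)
The plan is to combine the defining $Ext$-vanishing of an $\mathcal{X}$-injective complex with the essentiality of the injective envelope. First I would write down the canonical short exact sequence
$$0 \longrightarrow X \longrightarrow E(X) \longrightarrow E(X)/X \longrightarrow 0.$$
By hypothesis the cokernel $E(X)/X$ belongs to $C(\mathcal{X}^{*})$, hence it is exactly one of the test complexes appearing in Definition \ref{1.3}. Because $X$ is $\mathcal{X}$-injective, this yields $Ext^{1}(E(X)/X,X)=0$, and since that $Ext$-group classifies extensions of $E(X)/X$ by $X$, the displayed sequence must split. Therefore $X$ is a direct summand of $E(X)$, say $E(X)=X\oplus W$.

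The second step is to kill the complement $W$ using that $E(X)$ is an injective envelope, hence an essential extension of $X$. If $W\neq 0$ then essentiality forces $W\cap X\neq 0$, contradicting $W\cap X=0$ coming from the direct-sum decomposition; thus $W=0$ and $E(X)=X$. Consequently $X$ equals its own injective envelope and is an injective complex. The parenthetical assertion is the same computation with $Y$ in place of $E(X)$: from $Y/X\in C(\mathcal{X}^{*})$ and $\mathcal{X}$-injectivity of $X$ we again obtain $Ext^{1}(Y/X,X)=0$, so the sequence $0\to X\to Y\to Y/X\to 0$ splits and $X$ is a direct summand of $Y$ (no essentiality is invoked here, and no stronger conclusion is claimed).

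I do not expect a serious obstacle; the argument is the short chain ``$Ext^{1}=0\Rightarrow$ the sequence splits, and an essential direct summand is everything.'' The only points that genuinely need care are framework checks: that the category of complexes is one in which injective envelopes exist and are essential monomorphisms (so that the summand-versus-complement step is legitimate), and that $E(X)/X$ really qualifies as an admissible test object in Definition \ref{1.3} so that the $Ext$-vanishing applies verbatim. Both are immediate from the standing setting, so these are bookkeeping matters rather than true difficulties.
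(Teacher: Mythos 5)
Your proof is correct and takes essentially the same route as the paper: both apply the $\mathcal{X}$-injectivity of $X$ to the exact sequence $0 \rightarrow X \rightarrow E(X) \rightarrow E(X)/X \rightarrow 0$ (whose cokernel lies in $C(\mathcal{X}^{*})$ by hypothesis) to split it — you via the $Ext^{1}$-vanishing form of Definition \ref{1.3}, the paper via the equivalent formulation of lifting $id_{X}$ along $Hom(E(X),X) \rightarrow Hom(X,X) \rightarrow 0$ — and the parenthetical claim about $Y$ is handled identically in both. If anything, your write-up is slightly more complete, since the paper stops at ``$X$ is a direct summand of $E(X)$, hence injective'' and leaves implicit the essentiality argument that you spell out to force the complement to vanish and conclude $X=E(X)$.
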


\begin{proof} We know that every complex has an injective envelope,
so $X$ has an injective envelope $E(X)$. Then $E(X)$ is an injective
complex, and so it is exact. We have the following commutative
diagram;
\[\begin{diagram}
\node{0} \arrow{e}  \node{X} \arrow{s,t} {id_x} \arrow{e,r} {i}
\node{E(X)} \arrow{sw,r,..} {\phi} \\
\node[2]{X}
\end{diagram}\]
\noindent such that ${\phi}{i}={id}_{x}$. Therefore $X$ is a direct
summand of $E(X)$. So $X$ is an injective complex and hence it is
exact. Similarly, if $\frac{Y}{X} \in C(\mathcal{X}^{*})$, then we can
prove that X is a direct summand of Y.
\end{proof}

\begin{defn} \label{1.5} A complex I is
called DG-$\mathcal{X}$-injective, if each $I^{n}$ is
$\mathcal{X}$-injective and $\mathcal{H}om(E,I)$ is exact for all $E
\in \varepsilon_{\mathcal{X}}$.
A complex I is called DG-$\mathcal{X}$-projective, if each $I^{n}$
is $\mathcal{X}$-projective and $\mathcal{H}om(I,E)$ is exact for
all $E \in \varepsilon_{\mathcal{X}}$.
\end{defn}

\begin{lem} \label{1.6} Let $A \overset{\beta} \longrightarrow B \overset{\theta} \longrightarrow
C$ be an exact sequence of modules (complexes) where $Ker \beta \in \mathcal{X}$(C($\mathcal{X}^{*}$)). Then for
all $\mathcal{X}$-projective modules (complexes) I, $Hom(I,A) \longrightarrow
Hom(I,B) \longrightarrow Hom(I,C)$ is exact.
\end{lem}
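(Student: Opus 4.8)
The plan is to establish the two inclusions of the exactness statement separately, with the $\mathcal{X}$-projectivity of $I$ carrying the weight in the nontrivial direction. Throughout I write $\beta_{*}=\mathrm{Hom}(I,\beta)$ and $\theta_{*}=\mathrm{Hom}(I,\theta)$ for the induced maps, and I argue the module case in detail; the complex case is word-for-word the same once $\mathcal{X}$ is replaced by $C(\mathcal{X}^{*})$ and $\mathrm{Hom}$ is read as the group of chain maps.

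First I would dispose of the easy inclusion $\mathrm{im}\,\beta_{*}\subseteq\ker\theta_{*}$. Since the given sequence is exact at $B$ we have $\mathrm{im}\,\beta\subseteq\ker\theta$, hence $\theta\beta=0$, and applying the functor $\mathrm{Hom}(I,-)$ gives $\theta_{*}\beta_{*}=(\theta\beta)_{*}=0$. Thus every element of $\mathrm{im}\,\beta_{*}$ already lies in $\ker\theta_{*}$.

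The substantive step is the reverse inclusion $\ker\theta_{*}\subseteq\mathrm{im}\,\beta_{*}$. I would take $f\in\mathrm{Hom}(I,B)$ with $\theta_{*}f=\theta f=0$; then $\mathrm{im}\,f\subseteq\ker\theta=\mathrm{im}\,\beta$, so $f$ factors as a map $\bar{f}\colon I\to\mathrm{im}\,\beta$ followed by the inclusion $\mathrm{im}\,\beta\hookrightarrow B$. Here the hypothesis $\ker\beta\in\mathcal{X}$ enters: the corestriction of $\beta$ fits into the short exact sequence
\[ 0\longrightarrow\ker\beta\longrightarrow A\longrightarrow\mathrm{im}\,\beta\longrightarrow 0, \]
whose left-hand term lies in $\mathcal{X}$. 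Because $I$ is $\mathcal{X}$-projective, it enjoys the lifting property against precisely such sequences, so $\mathrm{Hom}(I,A)\to\mathrm{Hom}(I,\mathrm{im}\,\beta)$ is surjective; in particular $\bar{f}$ lifts to some $g\colon I\to A$ with $\bar{\beta}g=\bar{f}$, where $\bar{\beta}\colon A\to\mathrm{im}\,\beta$ denotes the corestriction of $\beta$. Composing with the inclusion $\mathrm{im}\,\beta\hookrightarrow B$ then yields $\beta g=f$, that is, $f=\beta_{*}g\in\mathrm{im}\,\beta_{*}$.

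Combining the two inclusions gives $\mathrm{im}\,\beta_{*}=\ker\theta_{*}$, which is exactly exactness of $\mathrm{Hom}(I,A)\to\mathrm{Hom}(I,B)\to\mathrm{Hom}(I,C)$ at the middle term. The one point that genuinely requires the hypotheses—and hence the main (if mild) obstacle—is recognizing that the short exact sequence extracted from $\beta$ has subobject $\ker\beta$ lying in $\mathcal{X}$, so that the defining lifting property of an $\mathcal{X}$-projective object applies verbatim; everything else is formal diagram chasing. For the complex version one repeats the argument in the category of complexes, observing that $\ker\beta\in C(\mathcal{X}^{*})$ is precisely the condition needed to invoke the $\mathcal{X}$-projectivity of the complex $I$.
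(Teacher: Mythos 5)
Your proof is correct and takes essentially the same route as the paper's: both arguments factor an element of $\ker \mathrm{Hom}(I,\theta)$ through $\mathrm{im}\,\beta = \ker\theta$ and then lift it along the short exact sequence $0 \to \ker\beta \to A \to \mathrm{im}\,\beta \to 0$, whose kernel lies in $\mathcal{X}$ (resp.\ $C(\mathcal{X}^{*})$), invoking the defining lifting property of $\mathcal{X}$-projectivity. The paper's version is merely terser, leaving the trivial inclusion $\mathrm{im}\,\beta_{*}\subseteq\ker\theta_{*}$ and the corestriction bookkeeping implicit.
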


\begin{proof} By the exact sequence $0 \longrightarrow Ker{\theta} \overset{i} \longrightarrow B \overset{\theta} \longrightarrow
C$, $0 \longrightarrow Hom(I,Ker{\theta})
\longrightarrow Hom(I,B) \longrightarrow Hom(I,C)$ is exact.
We have the following commutative diagram;
\[\begin{diagram} \node{A} \arrow{e,t} {\beta} \node{Im{\beta}} \arrow{e}
\node{0} \\ \node[2]{I} \arrow{n,t} {g} \arrow{nw,r,..} {f} 
\end{diagram}\]
\noindent such that ${\beta}{f}={g}$.
\noindent Since I is $\mathcal{X}$-projective module (complex) and
$Ker \beta \in \mathcal{X}$ $(C(\mathcal{X^{*}}))$, Hom$(I,A) \longrightarrow $Hom$(I,B) \longrightarrow
$Hom$(I,C)$ is exact.
\end{proof}
\noindent Dually we can give the following lemma;
\begin{lem} \label{1.7} Let $A \overset{\beta} \longrightarrow B \overset{\theta} \longrightarrow
C$ be an exact sequence of modules (complexes) where $\frac{C}{Im{\theta}} \in
\mathcal{X}$(C($\mathcal{X}^{*}$)). Then for all $\mathcal{X}$-injective modules (complexes) I,
$Hom(C,I) \longrightarrow
Hom(B,I) \longrightarrow Hom(A,I)$ is exact.
\end{lem}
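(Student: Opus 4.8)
The plan is to dualize verbatim the proof of Lemma~\ref{1.6}. First I would record the canonical factorization $\theta = i\,\theta'$ of $\theta$ through its image, where $\theta' : B \twoheadrightarrow Im\theta$ is the surjection induced by $\theta$ and $i : Im\theta \hookrightarrow C$ is the inclusion; exactness at $B$ gives $Im\beta = Ker\theta = Ker\theta'$. Since $Im\beta = Ker\theta$ we have $\theta\beta = 0$, so applying the contravariant functor $Hom(-,I)$ yields $\beta^{*}\theta^{*}=0$ and hence $Im(\theta^{*}) \subseteq Ker(\beta^{*})$, where $\theta^{*} = Hom(\theta,I)$ and $\beta^{*} = Hom(\beta,I)$. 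This is the trivial inclusion.

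For the reverse inclusion I would take an element $h \in Ker(\beta^{*})$, i.e.\ a map $h : B \to I$ with $h\beta = 0$. Then $Im\beta \subseteq Ker\,h$, and since $Im\beta = Ker\theta'$ the universal property of the surjection $\theta'$ produces a map $g : Im\theta \to I$ with $h = g\,\theta'$. Now comes the one substantive step: I invoke the exact sequence $0 \to Im\theta \overset{i}{\longrightarrow} C \to C/Im\theta \to 0$, in which $C/Im\theta \in \mathcal{X}$ by hypothesis. Because $I$ is $\mathcal{X}$-injective it is injective with respect to this sequence, so $Hom(C,I) \overset{i^{*}}{\longrightarrow} Hom(Im\theta,I) \to 0$ is exact and $g$ extends to some $\tilde g : C \to I$ with $g = \tilde g\, i$. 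Then $\theta^{*}(\tilde g) = \tilde g\,\theta = \tilde g\, i\,\theta' = g\,\theta' = h$, so $h \in Im(\theta^{*})$. Combining the two inclusions gives $Im(\theta^{*}) = Ker(\beta^{*})$, which is exactly exactness of $Hom(C,I) \to Hom(B,I) \to Hom(A,I)$ at the middle term.

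The only nontrivial point — and the precise dual of the lifting step in Lemma~\ref{1.6} — is the extension of $g$ to $\tilde g$; this is where both hypotheses are used together, $C/Im\theta \in \mathcal{X}$ combined with the $\mathcal{X}$-injectivity of $I$ guaranteeing surjectivity of $i^{*}$. Everything else is formal, being just functoriality of $Hom(-,I)$ and the factorization of $h$ through the cokernel of $\beta$. For the complex version the argument is identical word for word: one replaces ``$\mathcal{X}$-injective module'' by ``$\mathcal{X}$-injective complex'', takes $C/Im\theta \in C(\mathcal{X}^{*})$, and obtains the extension of $g$ from $Ext^{1}(C/Im\theta, I) = 0$ as in Definition~\ref{1.3}.
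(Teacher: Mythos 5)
Your proof is correct and is exactly what the paper intends: the paper gives no separate argument for Lemma \ref{1.7}, merely prefacing it with ``dually we can give the following lemma,'' and your argument is precisely the dualization of the paper's proof of Lemma \ref{1.6} (factor $h$ through $Im\theta \cong B/Ker\theta$, then extend over $i: Im\theta \hookrightarrow C$ using the $\mathcal{X}$-injectivity of $I$ together with $C/Im\theta \in \mathcal{X}$, resp.\ $C(\mathcal{X}^{*})$). Nothing further is needed.
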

\begin{example} \label{1.8} Let $I=.... \longrightarrow 0 \longrightarrow I^{0} \longrightarrow
0 \longrightarrow 0 \longrightarrow ...$ where $I^{0}$ is an
$\mathcal{X}$-injective($\mathcal{X}-projective$) module. Then I is
DG-$\mathcal{X}$-injective (DG-$\mathcal{X}$-projective) complex.
\end{example}
\begin{proof} Let $E:... \longrightarrow E^{-1} \overset{d^{-1}} \longrightarrow E^{0} \overset{d^{0}} \longrightarrow
E^{1} \overset{d^{1}} \longrightarrow E^{2} \overset{d^{2}}
\longrightarrow E^{3} \longrightarrow...$ be exact and $Kerd^{n}
\in \mathcal{X}$, then
$\mathcal{H}om(E,I) \cong ... Hom(E^{2},I^{0}) \longrightarrow Hom(E^{1},I^{0})
\longrightarrow Hom(E^{0},I^{0}) ...$. By Lemma \ref {1.7} $\mathcal{H}om(E,I)$ is exact.
\end{proof}
\begin{lem} \label{1.9} If a complex $X: \ldots \longrightarrow X_{n+1} \longrightarrow X_{n} \longrightarrow
X_{n-1} \longrightarrow \ldots$ is an
$\mathcal{X}$-injective($\mathcal{X}$-projective) complex, then for
all $n \in \mathbb{Z}$ $X_{n}$ is an
$\mathcal{X}$-injective($\mathcal{X}$-projective) module. Moreover if $\mathcal{X}$ is a class of finitely generated modules and $X$ is an $\mathcal{X}$- f.g. injective (projective) complex, then for
all $n \in \mathbb{Z}$ $X_{n}$ is an
$\mathcal{X}$-injective($\mathcal{X}$-projective) module.
\end{lem}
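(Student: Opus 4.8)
The plan is to reduce the statement about the complex $X$ to the module level by means of the \emph{disk complexes}. Fix $n \in \mathbb{Z}$ and a module $M \in \mathcal{X}$, and let $D_{n}(M)$ denote the complex
\[\cdots \longrightarrow 0 \longrightarrow M \overset{1_{M}}{\longrightarrow} M \longrightarrow 0 \longrightarrow \cdots\]
with the two copies of $M$ placed in degrees $n$ and $n-1$. Its only nonzero terms equal $M \in \mathcal{X}$ (and $0 \in \mathcal{X}$), so $D_{n}(M) \in C(\mathcal{X}^{*})$. A direct check of the commuting squares shows that a chain map $D_{n}(M) \to X$ is completely determined by its degree-$n$ component, which may be an arbitrary $R$-homomorphism $M \to X_{n}$; hence there is a natural isomorphism $Hom(D_{n}(M), X) \cong Hom(M, X_{n})$.

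The next step is to promote this to an Ext-isomorphism. Choose a projective presentation $0 \to K \to P \to M \to 0$ in $R$-Mod and apply $D_{n}(-)$ to obtain a short exact sequence of complexes $0 \to D_{n}(K) \to D_{n}(P) \to D_{n}(M) \to 0$ in which $D_{n}(P)$ is a projective complex. Feeding this into the long exact sequence for $Hom(-,X)$ and using the displayed natural isomorphism together with $Ext^{1}(D_{n}(P), X) = 0$, one sees that the cokernel of $Hom(P, X_{n}) \to Hom(K, X_{n})$ computes $Ext^{1}(D_{n}(M), X)$. But the module presentation shows that this same cokernel computes $Ext^{1}(M, X_{n})$, so $Ext^{1}(D_{n}(M), X) \cong Ext^{1}(M, X_{n})$.

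Now the result follows. Since $X$ is an $\mathcal{X}$-injective complex and $D_{n}(M) \in C(\mathcal{X}^{*})$, Definition \ref{1.3} gives $Ext^{1}(D_{n}(M), X) = 0$, whence $Ext^{1}(M, X_{n}) = 0$; as $M \in \mathcal{X}$ was arbitrary, $X_{n}$ is an $\mathcal{X}$-injective module. For the finitely generated part, note that when $\mathcal{X}$ consists of finitely generated modules the complex $D_{n}(M)$ is bounded with finitely generated terms, i.e. a finitely generated complex in $C(\mathcal{X}^{*})$; hence $\mathcal{X}$-f.g. injectivity of $X$ again yields $Ext^{1}(D_{n}(M), X) = 0$ and the same conclusion. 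The projective statement is handled dually: one replaces $D_{n}(M)$ by the co-disk complex with $M$ in degrees $n$ and $n+1$ (for which $Hom(X, -)$ picks out $Hom(X_{n}, M)$), uses an injective copresentation $0 \to M \to E \to E/M \to 0$ together with the fact that the disk of an injective module is an injective complex, and obtains $Ext^{1}(X, D(M)) \cong Ext^{1}(X_{n}, M)$, which vanishes by Lemma \ref{1.6}-type reasoning once $D(M) \in C(\mathcal{X}^{*})$.

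The step I expect to be the main obstacle is this Ext-isomorphism: it rests on knowing that $D_{n}(P)$ is projective whenever $P$ is projective (dually, that the co-disk of an injective module is injective) and then on matching the connecting maps of the two long exact sequences under the natural Hom-isomorphism. Once these disk-complex adjunction properties are in place, everything else is formal.
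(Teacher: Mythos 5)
Your proof is correct, but it takes a genuinely different route from the paper's. The paper argues element-wise with a pushout: given $\alpha\colon N\to X_{n}$ with $M/N\in\mathcal{X}$, it forms $\frac{X_{n}\oplus M}{A}$, splices it into $X$ to obtain an extension $0\to X\to T\to S\to 0$ in which the quotient $S$ has $M/N$ concentrated in the single degree $n$, splits this extension because $Ext^{1}(S,X)=0$, and reads off from the resulting retraction $\beta_{n}$ an extension of $\alpha$ to $M$; no disk complexes, no projective or injective objects of the category of complexes, and no long exact sequences are needed. You instead establish the isomorphism $Ext^{1}(D_{n}(M),X)\cong Ext^{1}(M,X_{n})$ from the adjunction $Hom(D_{n}(M),X)\cong Hom(M,X_{n})$ by dimension shifting along $0\to D_{n}(K)\to D_{n}(P)\to D_{n}(M)\to 0$, after which all four assertions (injective, projective, and both f.g.\ variants) drop out of Definition~\ref{1.3}. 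Your route needs the standard facts that disks on projective (injective) modules are projective (injective) complexes, plus the long exact Ext-sequence for complexes, so it is less self-contained; what it buys is uniformity and robustness. In particular, it sidesteps a delicate point in the paper's construction: since $T$ differs from $X$ only in degree $n$, one must define a differential $\frac{M\oplus X_{n}}{A}\to X_{n-1}$, and its well-definedness on the cosets of $A$ amounts to extending $d_{n}\alpha$ from $N$ to $M$ --- an extension of exactly the kind the lemma is trying to produce. Replacing the stalk quotient $S$ by the disk on $M/N$ (thus modifying degrees $n$ and $n-1$ of $X$), as the paper does with the quotient $E$ in the proof of Lemma~\ref{1.11}, repairs this, and your disk-based argument is in effect that repaired proof. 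Two small remarks: like the paper, you tacitly use the standing convention that bounded complexes whose nonzero terms lie in $\mathcal{X}$ belong to $C(\mathcal{X}^{*})$ (so zero terms are permitted); and in your projective case the vanishing of $Ext^{1}(X,D(M))$ follows directly from Definition~\ref{1.3}, not from ``Lemma~\ref{1.6}-type reasoning''.
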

\begin{proof} Let $0 \longrightarrow N \overset{i} \longrightarrow
M$ be exact such that $\frac{M}{N} \in \mathcal{X}$ and $\alpha: N \rightarrow
X_{n}$ be linear form the pushout;
\[\begin{diagram}
\node{N} \arrow{e,t}{i} \arrow{s,t}{\alpha} \node{M} \arrow{s,t,..}{\gamma_{n}}\\
\node{X_{n}} \arrow{e,t,..}{\theta_{n}} \node{\frac{X_{n} \oplus M}{A}} 
\end{diagram}\]
\noindent where $A=\{(\alpha(n),-i(n)): n \in N\}$.
By the following diagram;
\[\begin{diagram}
\node{0} \arrow{e} \node{X_{n + 1}} \arrow{e} \arrow{s} \node{X_{n +
1}} \arrow{e} \arrow{s} \node{0} \arrow{e} \arrow{s} \node{0} \\
\node{0} \arrow{e} \node{X_{n}} \arrow{e} \arrow{s} \node{\frac{{M
\oplus X_{n}}}{A}} \arrow{e} \arrow{s} \node{\frac{M}{N}} \arrow{e}
\arrow{s}  \node{0} \\
\node{0} \arrow{e} \node{X_{n - 1}} \arrow{e} \node{X_{n - 1}}
\arrow{e} \node{0} \arrow{e} \node{0}
\end{diagram}\]
\noindent we have the exact sequence $0 \longrightarrow X
\longrightarrow T \longrightarrow S\longrightarrow 0$ where
$T: ... \longrightarrow X_{n + 2} \longrightarrow  X_{n + 1}
\longrightarrow \frac{M \oplus X_{n}}{A} \longrightarrow X_{n - 1}
...$ and
$S: ... \longrightarrow 0 \longrightarrow  0 \longrightarrow \frac{M
}{N} \longrightarrow 0 ...$.
\noindent Since $X$ is a $\mathcal{X}-injective$ complex,
$Ext^{1}(S,X)$\\
$=0$, and so $0 \rightarrow Hom(S,X) \rightarrow Hom(T,X) \rightarrow Hom(X,X)
\rightarrow Ext^{1}(S,X)=0$. Therefore there exists $\beta_{n}:
T_{n}= \frac{M\oplus X_{n}}{A} \longrightarrow X_{n}$ such that
${\beta_{n}}{\theta_{n}}=1$. So
$${{\beta}^{n}}{\theta^{n}} (\alpha(n))=\alpha(n)$$
$${\beta}^{n}((\alpha(n),0)+A)=\alpha(n)$$
$${\beta}^{n}((0,i)+A)=\alpha(n)$$
$${{\beta}^{n}}{\gamma_{n}}i(n)=\alpha(n)$$
\noindent And hence ${{\beta}^{n}}{\gamma_{n}}i=\alpha$. So $X_{n}$
is an $\mathcal{X}$-injective module.
\end{proof}
\noindent The following example shows that if $X:... \rightarrow X_{n+1} \rightarrow X_{n} \rightarrow X_{n-1} \rightarrow ...$
is a complex such that $X_{n}$ are
$\mathcal{X}$-injective($\mathcal{X}$-projective) modules for all $n
\in \mathbb{Z}$, then X does not need to be \;$\mathcal{X}$-(f.g.)injective ($\mathcal{X}$-(f.g.)projective) complex.
\begin{example} \label{1.10} Let $R\in \mathcal{X}$ and $\mathcal{X}$-injective module and $f:R \rightarrow R\oplus R$ be a morphism such that $f(a)=(0,a)$ and $g:R \oplus R \rightarrow R$ be a morphism such
that $g(a,b)=a$. Then $gf=0$ where $g\neq 0$. Let we have the following diagrams;
\[\begin{diagram}
\node{...0} \arrow{e} \arrow{s} \node{R}
\arrow{e,t} {f} \arrow{s,r} {f} \node{R\oplus R} \arrow{e} \arrow{s,r} {g} \node{0...}  \arrow{s}  \\
\node{...0} \arrow{e} \node{0} \arrow{e} \node{R}
\arrow{e} \node{0...}  
\end{diagram}\]
\[\begin{diagram}
\node{...0} \arrow{e} \arrow{s} \node{R}
\arrow{e,t} {f} \arrow{s,r} {f} \node{R\oplus R} \arrow{e} \arrow{s,r} {1} \node{0...}  \arrow{s}  \\
\node{...0} \arrow{e} \node{R \oplus R}
\arrow{e,t} {1} \node{R \oplus R} \arrow{e} 
\node{0...} 
\end{diagram}\]

\noindent Then we have the diagram as follow,
\[\begin{diagram}
\node{...0} \arrow{e} \arrow{s} \node{R}
\arrow{e,t} {f} \arrow{s,r} {f} \node{R\oplus R} \arrow{e} \arrow{s,r} {1} \node{0...}  \arrow{s}  \\
\node{...0} \arrow{e} \arrow{s} \node{R \oplus R}
\arrow{e,t} {1} \arrow{s} \node{R \oplus R} \arrow{e} \arrow{s,r} {g}
\node{0...} 
\arrow{s} \\
\node{...0} \arrow{e} \node{0} \arrow{e} \node{R}
\arrow{e} \node{0...}  
\end{diagram}\]
\noindent such that $g1=0$. But this is impossible. So $\underline{R}$ cannot be an
$\mathcal{X}$-injective complex.
\noindent Dually, we can give an example for
$\mathcal{X}$-projectivity.
\end{example}

\begin{rem}There exists a module is both in $\mathcal{X}$ and an $\mathcal{X}$-injective module.
Let $\mathcal{X}$ be a class of injective modules and $R$ be an injective module, then $R$ is both in $\mathcal{X}$ and an $\mathcal{X}$-injective module. 	Moreover let a module $M$ be a flat cotorsion module (see Theorem 5.3.28 in \cite{4} for existence of such a module) and $\mathcal{X}$ be a class of flat modules, so is $M$.
\end{rem}
\begin{lem} \label{1.11} If $I \in \varepsilon_{\mathcal{X}}^\bot$, then each
$I^{n}$ is $\mathcal{X}$-injective for each $n \in \mathbb{Z}$.
\end{lem}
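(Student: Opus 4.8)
The plan is to reduce the complex-level orthogonality condition to a module-level $Ext$-vanishing, one cohomological degree at a time, by testing $I$ against a suitable exact complex built from a single module. Fix $n \in \mathbb{Z}$ and an arbitrary module $M \in \mathcal{X}$; the goal is to show $Ext^{1}(M,I^{n})=0$, which is exactly $\mathcal{X}$-injectivity of $I^{n}$. To this end I would form the disk complex $\overline{M}$ of Example \ref{1.4}, namely the complex having $M$ in the two consecutive degrees $n$ and $n+1$, identity differential between them, and $0$ elsewhere. This complex is exact, and its only nonzero cycle (kernel) module is $M$, sitting in degree $n+1$, all other kernels $\ker d^{k}$ being $0$; hence every kernel of $\overline{M}$ lies in $\mathcal{X}$ and $\overline{M}\in\varepsilon_{\mathcal{X}}$.

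The next step is the adjunction between the disk functor and evaluation. For any complex $Y$, a chain map $\overline{M}\rightarrow Y$ is determined by its degree-$n$ component $M\rightarrow Y^{n}$, since commutativity forces the degree-$(n+1)$ component to equal $d_{Y}^{n}$ composed with it (and the remaining square is automatically satisfied because $d_{Y}^{n+1}d_{Y}^{n}=0$). This gives a natural isomorphism $Hom(\overline{M},Y)\cong Hom(M,Y^{n})$. Thus $\overline{(-)}$ is left adjoint to the exact evaluation functor $Y\mapsto Y^{n}$; consequently $\overline{(-)}$ sends projective modules to projective complexes, and, being itself exact, it carries a projective resolution $P_{\bullet}\rightarrow M$ of modules to a projective resolution $\overline{P_{\bullet}}\rightarrow\overline{M}$ in the category of complexes.

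Computing $Ext^{1}(\overline{M},I)$ from this resolution and using the natural isomorphism degreewise, the Hom-complex $Hom(\overline{P_{\bullet}},I)$ is identified with $Hom(P_{\bullet},I^{n})$; taking cohomology yields $Ext^{1}(\overline{M},I)\cong Ext^{1}(M,I^{n})$. Since $\overline{M}\in\varepsilon_{\mathcal{X}}$ and $I\in\varepsilon_{\mathcal{X}}^{\bot}$, the left-hand side vanishes, so $Ext^{1}(M,I^{n})=0$. As $M\in\mathcal{X}$ and $n\in\mathbb{Z}$ were arbitrary, each $I^{n}$ is $\mathcal{X}$-injective, as claimed.

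The main obstacle is precisely the identification $Ext^{1}(\overline{M},I)\cong Ext^{1}(M,I^{n})$: one must check carefully that $\overline{(-)}$ really transports a module projective resolution to a complex projective resolution (exactness of $\overline{(-)}$ together with the fact that its right adjoint, evaluation, is exact) and that the disk--evaluation adjunction identifies the two Hom-complexes compatibly with the differentials, so that their cohomologies agree; everything else is bookkeeping. Two small points deserve mention: I am using that $0\in\mathcal{X}$, so that the zero kernels of $\overline{M}$ cause no trouble; and, conceptually, this argument is the exact-complex analogue of Lemma \ref{1.9}, with the \emph{disk} $\overline{M}$, which is exact, replacing the one-term \emph{sphere} used there, precisely because membership in $\varepsilon_{\mathcal{X}}$ demands an exact test complex.
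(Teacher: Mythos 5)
Your proof is correct, and it reaches the conclusion by a genuinely different route than the paper. The paper works with a single concrete extension problem: given $S \subseteq M$ with $M/S \in \mathcal{X}$ and $\alpha \colon S \to I^{n}$, it forms the pushout $I^{n}\oplus_{S} M$, splices it into $I$ to produce a complex $\overline{I}$, and obtains a short exact sequence $0 \to I \to \overline{I} \to E \to 0$ whose cokernel $E$ is the disk on $M/S$, hence lies in $\varepsilon_{\mathcal{X}}$; the hypothesis $Ext^{1}(E,I)=0$ splits this sequence, and the degree-$n$ component of the retraction, composed with the pushout maps, extends $\alpha$ to $M$ --- that is, $\mathcal{X}$-injectivity in its map-extension form. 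You instead prove a formal isomorphism of Ext groups: the disk--evaluation adjunction $Hom(\overline{M},Y)\cong Hom(M,Y^{n})$, together with exactness of both functors, shows that disks on projective modules are projective complexes and that $\overline{(-)}$ transports module projective resolutions to complex projective resolutions, giving $Ext^{1}(\overline{M},I)\cong Ext^{1}(M,I^{n})$, whose left-hand side vanishes because $\overline{M}\in\varepsilon_{\mathcal{X}}$. Your route is more conceptual, yields the isomorphism in every cohomological degree (so it is reusable elsewhere), and is in effect the standard argument one finds in \cite{5}; the paper's route is more elementary and self-contained, requiring neither a description of the projective objects of the category of complexes nor the identification of derived-functor Ext with Yoneda Ext there, but only the fact that vanishing of $Ext^{1}$ forces splitting of one explicitly constructed sequence. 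Note finally that both arguments rest on the same tacit assumption, which you commendably make explicit: the disk test complex has some zero kernels, so one needs $0\in\mathcal{X}$ for it to lie in $\varepsilon_{\mathcal{X}}$ (the paper's complex $E$ has exactly the same feature, left unremarked).
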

\begin{proof} Let $S \subseteq M$ be a submodule of a module M with $\frac{M}{S} \in \mathcal{X}$ and $\alpha: S
\longrightarrow I_{n}$ be linear form the pushout;
\[\begin{diagram}
\node{S} \arrow{e,t}{i} \arrow{s,t}{\alpha} \node{M} \arrow{s,t,..}{i_{1}}\\
\node{I^{n}} \arrow{e,t,..}{i_{2}} \node{\frac{I^{n}\oplus M}{A}=I^{n}\oplus_{S} M}
\end{diagram}\]
\noindent where $A=\{(\alpha(s),-s): s\in S\}$. Thus $i_{2}$ is
one-to-one the same as i. Then $\overline{I}:... \longrightarrow
{I^{n-1}} \longrightarrow {I^{n}\oplus_{S} M} \longrightarrow
{I^{n+1}} \longrightarrow {I^{n+2}} \longrightarrow ...$ is a
complex.
\[\begin{diagram}
\node{0} \arrow{e} \node{I^{n-1}} \arrow{e} \arrow{s} \node{I^{n-1}}
\arrow{e} \arrow{s} \node{0} \arrow{e} \arrow{s} \node{0} \\
\node{0} \arrow{e} \node{I^{n}} \arrow{e} \arrow{s} \node{I^{n}
\oplus_{S} M} \arrow{e} \arrow{s} \node{\frac{M}{S}} \arrow{e}
\arrow{s}  \node{0} \\
\node{0} \arrow{e} \node{I^{n+1}} \arrow{e} \node{I^{n+1}} \arrow{e}
\node{\frac{M}{S}} \arrow{e} \node{0} 
\end{diagram}\]
Therefore, we have an exact sequence $0 \longrightarrow I
\longrightarrow \overline{I} \longrightarrow E \longrightarrow 0$
where $E:...\longrightarrow \frac{M}{S} \longrightarrow \frac{M}{S}
\longrightarrow 0 \longrightarrow 0 \longrightarrow 0
\longrightarrow ...$ and so we have an exact sequence $0
\longrightarrow Hom(E,I) \longrightarrow Hom(\overline{I},I)
\longrightarrow Hom(I,I) \longrightarrow Ext^{1}(E,I)=0$ since $I \in
\varepsilon_{\mathcal{X}}^\bot$.
This implies that we can find $\overline{f}:\overline{I}
\longrightarrow I$ with $\overline{f}f=1$. Therefore, there exists a
function $\overline{f}^{n}:{I^{n} \oplus_{S} M} \longrightarrow
I^{n}$ with $\overline{f}^{n}f^{n}=1$. So,
$$\overline{f}^{n}f^{n} (\alpha(s))=\alpha(s)$$
$$\overline{f}^{n}((\alpha(s),0)+A)=\alpha(s)$$
$$\overline{f}^{n}((0,s)+A)=\alpha(s)$$
$$\overline{f}^{n}i_{1}i(s)=\alpha(s)$$
\noindent and hence ${\overline{f}_{n}}i_{1}i=\alpha$ and thus each
$I^{n} \in \mathcal{X}$-injective.

\end{proof}

\begin{lem} \label{1.12} Let $f:X \longrightarrow Y$ be a morphism of complexes.
Then the exact sequence $0 \longrightarrow Y \longrightarrow M(f)
\longrightarrow X[1] \longrightarrow 0$ associated with the mapping
cone $M(f)$ splits if and only if f is homotopic
to 0.
\end{lem}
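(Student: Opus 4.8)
The plan is to make the splitting condition entirely explicit and then recognize the resulting equation as the null-homotopy identity. First I would recall the structure of the cone: in each degree $M(f)_n = X_{n-1} \oplus Y_n$ as a module, with differential given in block form by $-d^X$ on the $X$-part, $d^Y$ on the $Y$-part, and an off-diagonal entry $f$ carrying the $X$-coordinate into the $Y$-coordinate. The inclusion $Y \to M(f)$ is $y \mapsto (0,y)$ and the surjection $\pi : M(f) \to X[1]$ is the projection $(x,y) \mapsto x$. The key preliminary remark is that, as a sequence of \emph{modules}, $0 \to Y \to M(f) \to X[1] \to 0$ is already split in every degree (it is a coordinate direct sum), so the entire content of the lemma is whether a degreewise splitting can be chosen to be a chain map.

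Next I would reduce ``the sequence splits'' to the existence of a chain-map section $\sigma : X[1] \to M(f)$ with $\pi\sigma = \mathrm{id}_{X[1]}$ (equivalently one may work with a chain-map retraction $M(f)\to Y$; the two are interchangeable for a short exact sequence). Since $\pi$ is the projection onto the $X$-coordinate, any section is forced to have the form $\sigma_n(x) = (x, s_n(x))$ for a family of module homomorphisms $s_n : (X[1])_n \to Y_n$, and conversely every such family defines a degreewise section. Thus giving a section is literally the same as giving the family $\{s_n\}$, with no constraint imposed yet.

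The crux is to impose that $\sigma$ commute with the differentials, i.e. $d^{M(f)}\sigma = \sigma\, d^{X[1]}$. Writing this out degreewise and comparing the two coordinates, the $X$-coordinate is satisfied automatically, while the $Y$-coordinate, after the diagonal terms cancel, collapses to exactly $f = \pm(d^Y s + s\, d^X)$, the sign being whatever the chosen cone convention dictates. This is precisely the statement that $f$ is homotopic to $0$, with $s$ (up to sign) serving as the chain homotopy. Hence a chain-map section exists if and only if such a family $\{s_n\}$ exists, that is, if and only if $f$ is homotopic to $0$.

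For the converse I would run this computation backwards: starting from a null-homotopy $u$ of $f$ (so $f = d^Y u + u\, d^X$), I set $\sigma_n(x) = (x, -u_n(x))$ and verify directly that $\sigma$ is a chain map with $\pi\sigma = \mathrm{id}$, which splits the sequence. I expect the only delicate point to be keeping the sign conventions for the shift differential $d^{X[1]} = -d^X$ and for the off-diagonal entry of $d^{M(f)}$ mutually consistent; once those are pinned down, the whole argument is the one-line observation that the section's chain-map equation is identical to the homotopy equation for $f$.
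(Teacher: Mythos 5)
Your proof is correct, and it is worth noting that it is genuinely more than what the paper provides: the paper's entire ``proof'' of this lemma is the sentence ``It follows from \cite{2}'', i.e.\ a citation to Enochs--Jenda--Xu with no argument given. Your computation supplies exactly the content that citation points to, and it is the standard argument: since $\pi\colon M(f)\to X[1]$ is the coordinate projection, any degreewise section has the form $\sigma_n(x)=(x,s_n(x))$ with $s_n\colon X_{n-1}\to Y_n$; the $X$-coordinate of the chain-map condition $d^{M(f)}\sigma=\sigma\,d^{X[1]}$ holds automatically, and the $Y$-coordinate reads $d^Y s(x)+f(x)=-s(d^X x)$, i.e.\ $f=-(d^Y s+s\,d^X)$, so sections that are chain maps correspond bijectively (via $u=-s$) to null-homotopies of $f$; the converse direction, defining $\sigma_n(x)=(x,-u_n(x))$ from a homotopy $u$, checks out by the same identity. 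Your reduction of ``the sequence splits'' to ``there exists a chain-map section'' is legitimate since the splitting lemma holds in the abelian category of complexes, and your remark that the sequence is already split degreewise correctly isolates where the actual content lies. The only point to watch is the one you flagged yourself: the sign in $f=\pm(d^Y s+s\,d^X)$ depends on the cone convention, but since a family $\{s_n\}$ satisfies one sign convention if and only if $\{-s_n\}$ satisfies the other, the equivalence is unaffected. In short, your argument is a complete, self-contained proof of a statement the paper merely outsources.
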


\begin{proof} It follows from \cite{2}.
\end{proof}

\begin{lem} \label{1.13} Let $X$ and $I$ be complexes. If $Ext^{1}(X,I[n])=0$
for all $n \in \mathbb{Z}$, then $\mathcal{H}om(X,I)$ is exact.
\end{lem}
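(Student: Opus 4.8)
The plan is to identify the homology of $\mathcal{H}om(X,I)$ with homotopy classes of chain maps, and then to use the hypothesis together with Lemma~\ref{1.12} to show that every such class is trivial.

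First, I recall from the definition $\mathcal{H}om(X,I)_{n}=\prod_{p+q=n}Hom(X_{-p},I_{q})$ that a degree $n$ element is a family of module maps $f=(f_{i})_{i}$ with $f_{i}:X_{i}\rightarrow I_{i+n}$, and that its differential is $f\mapsto d_{I}f-(-1)^{n}f d_{X}$. Hence a degree $n$ cycle is precisely a chain map $X\rightarrow I[n]$, and a degree $n$ cycle is a boundary precisely when the corresponding chain map is homotopic to $0$. Consequently
$$H_{n}(\mathcal{H}om(X,I))\cong\{\text{chain maps } X\rightarrow I[n]\}/(\text{homotopy}),$$
so it suffices to prove that every chain map $g:X\rightarrow I[n]$ is homotopic to $0$, for all $n\in\mathbb{Z}$.

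Next, given such a $g$, I pass to the mapping cone and its associated short exact sequence of complexes
$$0\longrightarrow I[n]\longrightarrow M(g)\longrightarrow X[1]\longrightarrow 0 .$$
This sequence represents an element of $Ext^{1}(X[1],I[n])$. Since the shift is an exact autoequivalence of the category of complexes, applying $[-1]$ to both arguments gives $Ext^{1}(X[1],I[n])\cong Ext^{1}(X,I[n-1])$, and the right-hand side vanishes by hypothesis (it is the assumption $Ext^{1}(X,I[m])=0$ with $m=n-1$). Therefore the displayed sequence splits, and by Lemma~\ref{1.12} the splitting of the mapping-cone sequence forces $g$ to be homotopic to $0$. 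As $g$ and $n$ were arbitrary, $H_{n}(\mathcal{H}om(X,I))=0$ for every $n$, that is, $\mathcal{H}om(X,I)$ is exact.

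The main obstacle I anticipate is pinning down the first step precisely: with the paper's sign and indexing conventions for the differential on $\mathcal{H}om(X,I)$ and for the shift $[n]$, one must verify that degree $n$ cycles are genuinely the chain maps $X\rightarrow I[n]$ and that the boundaries are exactly the null-homotopic ones. The bookkeeping with the factor $(-1)^{n}$ and with the direction of the shift has to be consistent so that the extra shift appearing in the cokernel $X[1]$ is absorbed by the ``for all $n$'' in the hypothesis; this is what makes the reduction $Ext^{1}(X[1],I[n])\cong Ext^{1}(X,I[n-1])$ land inside the given vanishing assumption.
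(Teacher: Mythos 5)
Your proposal is correct and follows essentially the same route as the paper: both arguments reduce exactness of $\mathcal{H}om(X,I)$ to showing every chain map into a shift of $I$ is null-homotopic, form the mapping cone sequence, split it using the $Ext^{1}$ vanishing, and invoke Lemma~\ref{1.12}. The only difference is bookkeeping — the paper starts from a map $X[-1]\rightarrow I[n]$ so that the cone sequence ends in $X$ and the hypothesis applies verbatim, while you start from $X\rightarrow I[n]$ and shift the $Ext^{1}$ group instead; your version also spells out the identification of $H_{n}(\mathcal{H}om(X,I))$ with homotopy classes, which the paper leaves implicit.
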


\begin{proof} Since $Ext^{1}(X,I[n])=0$, if $f: X[-1] \rightarrow
I[n]$ is a morphism, then $0 \rightarrow I[n] \rightarrow M(f)
\rightarrow X \rightarrow 0$ splits.

By Lemma \ref{1.12}, $f: X[-1] \rightarrow I[n]$ is homotopic to zero for
all n. So $f^{1}: X \rightarrow I[n+1]$ is homotopic to zero for all $n \in
\mathbb{Z}$. Thus $\mathcal{H}om(X,I)$ is exact.
\end{proof}
\noindent In \cite{5} the following theorem is proved in the case
when $(\mathcal{X},\mathcal{X}^{\bot})$ is a cotorsion pair.
\begin{thm}\label{t} Let $\mathcal{X}$ be extension closed. Then \;${\varepsilon_{\mathcal{X}}}^{\bot}(^{\bot}\varepsilon_{\mathcal{X}})$ =
DG-$\mathcal{X}$-injective(projective).
\end{thm}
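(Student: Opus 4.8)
The plan is to prove the two equalities by establishing each as a pair of inclusions, and to obtain the projective statement $^{\bot}\varepsilon_{\mathcal{X}} = \mbox{DG-}\mathcal{X}\mbox{-projective}$ by dualizing the injective argument verbatim (replacing $\mathcal{H}om(E,-)$ by $\mathcal{H}om(-,E)$, $\mathcal{X}$-injective terms by $\mathcal{X}$-projective terms, and Lemma \ref{1.11} by its projective analogue). So I concentrate on showing $\varepsilon_{\mathcal{X}}^{\bot} = \mbox{DG-}\mathcal{X}\mbox{-injective}$. Throughout I will use that $\varepsilon_{\mathcal{X}}$ is closed under the shift functor: if $E$ is exact with all kernels in $\mathcal{X}$, then each $E[n]$ is again exact and its kernels are shifts of the kernels of $E$, hence still lie in $\mathcal{X}$.

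First I would handle the inclusion $\varepsilon_{\mathcal{X}}^{\bot}\subseteq \mbox{DG-}\mathcal{X}\mbox{-injective}$. Let $I\in\varepsilon_{\mathcal{X}}^{\bot}$. Lemma \ref{1.11} already gives that each $I^{n}$ is $\mathcal{X}$-injective, so it remains to check that $\mathcal{H}om(E,I)$ is exact for every $E\in\varepsilon_{\mathcal{X}}$. For this I would invoke Lemma \ref{1.13}: it suffices to show $Ext^{1}(E,I[n])=0$ for all $n\in\mathbb{Z}$. Using the shift adjunction $Ext^{1}(E,I[n])\cong Ext^{1}(E[-n],I)$ together with the closure of $\varepsilon_{\mathcal{X}}$ under shifts noted above, we have $E[-n]\in\varepsilon_{\mathcal{X}}$, whence $Ext^{1}(E[-n],I)=0$ because $I\in\varepsilon_{\mathcal{X}}^{\bot}$. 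Thus $\mathcal{H}om(E,I)$ is exact and $I$ is DG-$\mathcal{X}$-injective.

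The reverse inclusion $\mbox{DG-}\mathcal{X}\mbox{-injective}\subseteq\varepsilon_{\mathcal{X}}^{\bot}$ is where the hypothesis that $\mathcal{X}$ is extension closed enters. Let $I$ be DG-$\mathcal{X}$-injective and take any $E\in\varepsilon_{\mathcal{X}}$; I must show every extension $0\longrightarrow I\longrightarrow Z\longrightarrow E\longrightarrow 0$ of complexes splits. Writing $Z^{n}=\ker d_{E}^{n}$ for the kernels of $E$, exactness of $E$ gives short exact sequences $0\to Z^{n}\to E^{n}\to Z^{n+1}\to 0$ of modules with $Z^{n},Z^{n+1}\in\mathcal{X}$; since $\mathcal{X}$ is extension closed, each term $E^{n}$ itself lies in $\mathcal{X}$. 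Because each $I^{n}$ is $\mathcal{X}$-injective, the degreewise sequence $0\to I^{n}\to Z^{n}\to E^{n}\to 0$ then splits (its cokernel $E^{n}\in\mathcal{X}$ kills $Ext^{1}(E^{n},I^{n})$). Hence the extension is degreewise split, so it is isomorphic to the mapping-cone sequence $0\longrightarrow I\longrightarrow M(f)\longrightarrow E\longrightarrow 0$ determined by a chain map $f\colon E[-1]\longrightarrow I$. By Lemma \ref{1.12} this sequence splits precisely when $f$ is homotopic to $0$; and since $I$ is DG-$\mathcal{X}$-injective and $E\in\varepsilon_{\mathcal{X}}$, the complex $\mathcal{H}om(E,I)$ is exact, which forces the homotopy class of $f$ to vanish. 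Therefore $f\simeq 0$, the sequence splits, and $Ext^{1}(E,I)=0$, i.e.\ $I\in\varepsilon_{\mathcal{X}}^{\bot}$.

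The main obstacle I anticipate is the last inclusion, and within it two points require care. The first is the reduction to a degreewise split extension: this is exactly where extension closedness of $\mathcal{X}$ is indispensable, since without it the terms $E^{n}$ need not lie in $\mathcal{X}$ and the $\mathcal{X}$-injectivity of the $I^{n}$ would not yield the degreewise splittings. The second, and more delicate, point is the bookkeeping that identifies a degreewise split extension with the mapping cone of a chain map $f\colon E[-1]\to I$ and matches the vanishing of its homotopy class with the exactness of $\mathcal{H}om(E,I)$; one must check, using the explicit formula $\mathcal{H}om(X,Y)_{n}=\prod_{p+q=n}Hom(X_{-p},Y_{q})$, that a null-homotopy of $f$ is the same datum as the boundary identity expressing the relevant cycle of $\mathcal{H}om(E,I)$ as a boundary, so that Lemma \ref{1.12} and Definition \ref{1.5} combine correctly. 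Once these two steps are in place, the remaining arguments are the routine applications of Lemmas \ref{1.11} and \ref{1.13} recorded above.
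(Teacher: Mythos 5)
Your proposal is correct and takes essentially the same route as the paper: the inclusion $\varepsilon_{\mathcal{X}}^{\bot}\subseteq\mbox{DG-}\mathcal{X}\mbox{-injective}$ via Lemma \ref{1.11} and Lemma \ref{1.13} together with closure of $\varepsilon_{\mathcal{X}}$ under shifts, and the reverse inclusion by using extension closedness of $\mathcal{X}$ to split an arbitrary extension degreewise, identifying it with a mapping cone of a chain map, and combining Lemma \ref{1.12} with the exactness of $\mathcal{H}om(E,I)$. The only blemish is notational: you use $Z^{n}$ both for the kernels of $E$ and for the degree-$n$ terms of the middle complex of the extension, so one of the two should be renamed.
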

\begin{proof} By Lemma 2.13 and Lemma 2.15 we have that \;${\varepsilon_{\mathcal{X}}}^{\bot}(^{\bot}\varepsilon_{\mathcal{X}}) \subseteq$
DG-$\mathcal{X}$-injective(projective).
\noindent Let $I\in$ DG-$\mathcal{X}$-injective. Then $\mathcal{H}om(X,I)$ is exact for all $X \in \varepsilon_{\mathcal{X}}$ and so for all n $f:X\rightarrow I[n]$ is homotopic to zero. By Lemma \ref{1.12} $A:0\rightarrow I[n]\rightarrow M(f)\rightarrow X[1]\rightarrow 0$ is split exact. We know that any exact complex \;$B:0\rightarrow I[n]\rightarrow Y\rightarrow X[1]\rightarrow 0$\; splits at module level since the $I[n]^{m}$ are $\mathcal{X}$-injective modules and $X^{m}\in \mathcal{X}$. Therefore the exact sequences $A$ and $B$ are isomorphic.
It is known that $Ext^{1}(C,A) = 0 $ if and only if every short exact sequence
$0 \rightarrow A \rightarrow B \rightarrow C \rightarrow 0$ is split. This implies that $Ext^{1}(X,I[n])=0$ and thus the converse inclusion is proved.  
\end{proof}
If we use Theorem \ref{t}, then we can give the following examples since $\mathcal{X}$ and ${\varepsilon_{\mathcal{X}}}^{\bot}(^{\bot}\varepsilon_{\mathcal{X}})$ 
are extension closed and every right(left) bounded complex is a direct (inverse) limit of bounded complexes.
\begin{example} Let $\mathcal{X}$ be extension closed. Then every $\mathcal{X}$-projective\\(injective) complex is DG-$\mathcal{X}$-projective(injective). Every right(left)\\ bounded complex I where $I_{i}$ is an
$\mathcal{X}$-projective(injective) module is a DG-$\mathcal{X}$-projective(injective) complex. 
\end{example}
\begin{prop}\label{c} Let $\mathcal{X}$ be closed under extensions, summands and direct limits. If $\mathcal{X}$ is a precovering class containing projective modules, then  $(\varepsilon_{\mathcal{X}}$,
DG-$\mathcal{X}$-injective) and (DG-$\mathcal{Y}$-projective,$\varepsilon_{\mathcal{Y}})$ are cotorsion pairs where $\mathcal{Y}=\mathcal{X}$-injective. If $\mathcal{X}$ is closed under taking kernels of epics and the class of finitely presented modules, then this cotorsion pairs are hereditary and DG-$\mathcal{X}$-injective$ \cap \varepsilon=\varepsilon_{\mathcal{X}-injective}\subseteq C(\mathcal{X}-f.g.injective)$. Moreover if $C(\mathcal{X^{*}})$ is also a covering class , then every complex has a monic C($\mathcal{X}-injective$) preenvelope with cokernel in $C(\mathcal{X^{*}})$.
\end{prop}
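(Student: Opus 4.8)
The plan is to reduce the whole statement to the module-level cotorsion pair $(\mathcal{X},\mathcal{X}\text{-injective})$ and then transport it to the category of complexes using the lemmas already established. First I would record the module-level fact. By definition $\mathcal{X}\text{-injective}=\mathcal{X}^{\bot}$, which is automatically closed under extensions. Since $\mathcal{X}$ is closed under direct limits and is precovering, it is covering (Enochs, cf. \cite{4}); as $\mathcal{X}$ is extension closed, Wakamatsu's Lemma makes every such cover special, so each module $M$ fits in a short exact sequence $0\to L\to F\to M\to 0$ with $F\in\mathcal{X}$ and $L\in\mathcal{X}^{\bot}$ (the cover is epic because $\mathcal{X}$ contains the projectives). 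Summand closure of $\mathcal{X}$ then forces $^{\bot}(\mathcal{X}^{\bot})=\mathcal{X}$ by the usual splitting argument, so $(\mathcal{X},\mathcal{X}\text{-injective})$ is a (perfect) cotorsion pair. I also note that $\mathcal{Y}=\mathcal{X}\text{-injective}=\mathcal{X}^{\bot}$ is extension closed and satisfies $\mathcal{Y}\text{-projective}={}^{\bot}\mathcal{Y}={}^{\bot}(\mathcal{X}^{\bot})=\mathcal{X}$, which will identify the second pair.

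For the two cotorsion pairs on complexes, Theorem \ref{t} already gives $\varepsilon_{\mathcal{X}}^{\bot}=\text{DG-}\mathcal{X}\text{-injective}$ and ${}^{\bot}\varepsilon_{\mathcal{Y}}=\text{DG-}\mathcal{Y}\text{-projective}$, since both $\mathcal{X}$ and $\mathcal{Y}$ are extension closed; it remains to verify the opposite orthogonals. If $C\in{}^{\bot}(\text{DG-}\mathcal{X}\text{-injective})$, then, because every DG-injective complex is DG-$\mathcal{X}$-injective (injective modules are $\mathcal{X}$-injective and $\varepsilon_{\mathcal{X}}\subseteq\varepsilon$), we get $C\in{}^{\bot}(\text{DG-injective})=\varepsilon$, so $C$ is exact. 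Testing $C$ against the sphere complexes $S^{n}(W)$ with $W$ an $\mathcal{X}$-injective module, which are DG-$\mathcal{X}$-injective by Example \ref{1.8}, the standard computation of $Ext^{1}$ in the category of complexes against a sphere (see \cite{2}) reduces $Ext^{1}(C,S^{n}(W))=0$ to $Ext^{1}(Z_{n}(C),W)=0$ for every $\mathcal{X}$-injective $W$, i.e. $Z_{n}(C)\in{}^{\bot}(\mathcal{X}^{\bot})=\mathcal{X}$. Hence the kernels of $C$ lie in $\mathcal{X}$ and $C\in\varepsilon_{\mathcal{X}}$, so $(\varepsilon_{\mathcal{X}},\text{DG-}\mathcal{X}\text{-injective})$ is a cotorsion pair; the pair $(\text{DG-}\mathcal{Y}\text{-projective},\varepsilon_{\mathcal{Y}})$ follows by the dual argument using projective spheres.

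Assume in addition that $\mathcal{X}$ is the class of finitely presented modules and is closed under kernels of epimorphisms. Heredity I would obtain by showing $\varepsilon_{\mathcal{X}}$ is closed under kernels of epimorphisms: given $0\to C'\to C\to C''\to 0$ with $C,C''\in\varepsilon_{\mathcal{X}}$, the complex $C'$ is exact and the snake lemma yields short exact sequences $0\to Z_{n}(C')\to Z_{n}(C)\to Z_{n}(C'')\to 0$ of cycles, whence $Z_{n}(C')$ is the kernel of an epimorphism between members of $\mathcal{X}$ and therefore lies in $\mathcal{X}$; the projective pair is dual. For the displayed equality, the inclusion $\varepsilon_{\mathcal{X}\text{-injective}}\subseteq\text{DG-}\mathcal{X}\text{-injective}\cap\varepsilon$ is read off componentwise, since an extension of two $\mathcal{X}^{\bot}$-modules is again in $\mathcal{X}^{\bot}$, while the $\mathcal{H}om$-exactness follows from $Ext^{1}(E,C)=0$ for $E\in\varepsilon_{\mathcal{X}}$ (cycles of $E$ in $\mathcal{X}$, cycles of $C$ in $\mathcal{X}^{\bot}$) together with Lemma \ref{1.13}; the reverse inclusion comes from testing a DG-$\mathcal{X}$-injective exact $C$ against exact complexes built from $W\in\mathcal{X}$ to force $Z_{n}(C)\in\mathcal{X}^{\bot}$. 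The final containment $\varepsilon_{\mathcal{X}\text{-injective}}\subseteq C(\mathcal{X}\text{-f.g.injective})$ is exactly what was recorded in Example \ref{1.4} for $\mathcal{X}$ finitely presented.

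For the last assertion I would exploit the duality between precovers and preenvelopes. By Definition \ref{1.3}, $C(\mathcal{X}\text{-injective})=C(\mathcal{X}^{*})^{\bot}$, and $C(\mathcal{X}^{*})$ is extension closed because $\mathcal{X}$ is, and contains every projective complex, so each $C(\mathcal{X}^{*})$-cover is epic. Granting that $C(\mathcal{X}^{*})$ is covering, Wakamatsu's Lemma upgrades covers to special precovers $0\to K\to F\to M\to 0$ with $F\in C(\mathcal{X}^{*})$ and $K\in C(\mathcal{X}^{*})^{\bot}=C(\mathcal{X}\text{-injective})$. Since the category of complexes has enough injectives and injective complexes lie in $C(\mathcal{X}\text{-injective})$, Salce's pullback construction converts these into special preenvelopes: embed $M$ in an injective complex, take a special precover of the cokernel, and pull back to obtain $0\to M\to P\to A\to 0$ with $P\in C(\mathcal{X}\text{-injective})$ and $A\in C(\mathcal{X}^{*})$; the vanishing $Ext^{1}(A,B)=0$ for $B\in C(\mathcal{X}\text{-injective})$ makes $M\to P$ a genuine preenvelope, monic with cokernel in $C(\mathcal{X}^{*})$. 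I expect the main obstacle to be the two $Ext$-of-cycles computations, namely the sphere identity $Ext^{1}(C,S^{n}(W))\cong Ext^{1}(Z_{n}(C),W)$ powering the reverse orthogonal inclusions and the cyclewise argument behind $\text{DG-}\mathcal{X}\text{-injective}\cap\varepsilon=\varepsilon_{\mathcal{X}\text{-injective}}$, together with the point that one genuinely needs $C(\mathcal{X}^{*})$ to be \emph{covering} (not merely precovering) for Wakamatsu's Lemma to supply the special precover that feeds Salce's construction.
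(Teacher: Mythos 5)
Your proposal is correct, and its opening step coincides with the paper's own proof: precovering plus closure under direct limits gives a covering class, the covers are epic since $\mathcal{X}$ contains the projectives, Wakamatsu's Lemma makes them special, and summand closure yields ${}^{\bot}(\mathcal{X}^{\bot})=\mathcal{X}$, so $(\mathcal{X},\mathcal{X}\text{-injective})$ is a complete cotorsion pair. Where you genuinely diverge is everything at the level of complexes. The paper disposes of this part by citation: the hereditary claim and the equality DG-$\mathcal{X}$-injective $\cap\,\varepsilon=\varepsilon_{\mathcal{X}\text{-injective}}$ are attributed to Corollary 3.13 of \cite{5} together with Example \ref{1.4}, and the remaining assertions (the two cotorsion pairs themselves and the final preenvelope statement) are declared ``easy.'' You instead inline the arguments: the sphere computation $Ext^{1}(C,S^{n}(W))\cong Ext^{1}(Z_{n}(C),W)$ — which is legitimate here because you first force $C$ to be exact via DG-injective $\subseteq$ DG-$\mathcal{X}$-injective and ${}^{\bot}(\text{DG-injective})=\varepsilon$ from \cite{2} — identifies the missing orthogonals on both sides; the snake lemma on cycles gives heredity; augmented projective resolutions of modules in $\mathcal{X}$ (whose syzygies remain in $\mathcal{X}$ precisely because of closure under kernels of epics) serve as test objects for DG-$\mathcal{X}$-injective $\cap\,\varepsilon\subseteq\varepsilon_{\mathcal{X}\text{-injective}}$; and Salce's pullback construction, fed by Wakamatsu applied to the assumed $C(\mathcal{X}^{*})$-covers, produces the monic special preenvelope with cokernel in $C(\mathcal{X}^{*})$. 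What each approach buys: the paper's version is short but requires the reader to check that Gillespie's class $dg\widetilde{\mathcal{B}}$ agrees with the paper's DG-$\mathcal{X}$-injective (that is exactly Theorem \ref{t}) and to reconstruct the ``easy'' parts; your version is self-contained within the paper's toolkit (Theorem \ref{t}, Lemma \ref{1.13}, Example \ref{1.8}), and it makes visible exactly where each hypothesis enters — summands for ${}^{\bot}(\mathcal{X}^{\bot})=\mathcal{X}$, kernels of epics for heredity and the syzygy test, finitely presented only for the containment in $C(\mathcal{X}\text{-f.g.injective})$, and covering (not merely precovering) for the last clause — at the modest cost of citing rather than proving the standard sphere/disk $Ext$ identities for exact complexes.
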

\begin{proof} Since $\mathcal{X}$ is a precovering class containing projective modules and it is closed under direct limits, it is an epic covering class . By Wakamatsu Lemma it is a special covering class, so $(\mathcal{X},\mathcal{X}^{\bot})$ is a complete cotorsion pair (see \cite{4}). If $\mathcal{X}$ is closed under taking kernels of epics and the class of finitely presented modules, then $(\varepsilon_{\mathcal{X}}$, DG-$\mathcal{X}$-injective) is a hereditary cotorsion pair and $DG-\mathcal{X}-injective\cap \varepsilon=\varepsilon_{\mathcal{X}-injective}\subseteq C(\mathcal{X}-f.g.injective)$ by Corollary 3.13 in \cite{5} and Example 2.4. The other parts are easy.
\end{proof}
\begin{cor} Let $\mathcal{X}$ be closed under extensions, direct sum and pure quotients and let $\mathcal{X}$ contain the ring $R$.  Then $(\mathcal{X},\mathcal{X}-injective)$ is a perfect cotorsion pair. Thus $(\varepsilon_{\mathcal{X}}$,DG-$\mathcal{X}$-injective) and (DG-$\mathcal{Y}$-projective,$\varepsilon_{\mathcal{Y}})$ are cotorsion pairs where $\mathcal{Y}=\mathcal{X}$-injective. 
\end{cor}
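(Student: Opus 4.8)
The plan is to deduce everything from the hypotheses by reducing the statement to Proposition~\ref{c}, the only genuinely new input being that the purity assumptions force $\mathcal{X}$ to be a covering class. First I would extract the elementary closure consequences. Since the canonical map $\bigoplus_i M_i \to \varinjlim M_i$ attached to a directed system is a pure epimorphism, a class closed under direct sums and pure quotients is automatically closed under direct limits; and since a direct summand is a split, hence pure, epimorphic image, $\mathcal{X}$ is closed under direct summands as well. Because $R \in \mathcal{X}$ and $\mathcal{X}$ is closed under direct sums and summands, $\mathcal{X}$ contains every free and hence every projective module. Note also that, by definition, $\mathcal{X}$-injective is exactly $\mathcal{X}^{\bot}$.

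The heart of the proof is to show that $(\mathcal{X},\mathcal{X}\text{-injective}) = (\mathcal{X},\mathcal{X}^{\bot})$ is a perfect cotorsion pair, and I would first obtain completeness. As $\mathcal{X}$ is closed under direct limits and under pure epimorphic images, El Bashir's covering theorem shows that $\mathcal{X}$ is a covering class; since $\mathcal{X}$ contains the projectives, each such cover is an epimorphism, so $\mathcal{X}$ is an epic covering class, exactly as in the proof of Proposition~\ref{c}. Because $\mathcal{X}$ is closed under extensions, Wakamatsu's Lemma then forces the kernel of every cover into $\mathcal{X}^{\bot}$, i.e. every module admits a special $\mathcal{X}$-precover. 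This yields the cotorsion-pair identity $\mathcal{X} = {}^{\bot}(\mathcal{X}^{\bot})$: given $M \in {}^{\bot}(\mathcal{X}^{\bot})$ and a special precover $0 \to K \to X \to M \to 0$ with $X \in \mathcal{X}$ and $K \in \mathcal{X}^{\bot}$, the vanishing $Ext^{1}(M,K)=0$ splits the sequence and realizes $M$ as a summand of $X$, whence $M \in \mathcal{X}$. Thus $(\mathcal{X},\mathcal{X}^{\bot})$ is a cotorsion pair with enough projectives; by Salce's Lemma the existence of special precovers is equivalent to that of special preenvelopes, so the pair is complete, and since $\mathcal{X}$ is closed under direct limits, the standard characterization of perfect pairs (complete together with left-hand class closed under direct limits) shows the pair is perfect (see \cite{4}).

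The remaining assertion is then a direct appeal to Proposition~\ref{c}. Having shown that $\mathcal{X}$ is closed under extensions, summands and direct limits, contains the projective modules, and is (pre)covering, all the hypotheses of that proposition are in place, so $(\varepsilon_{\mathcal{X}}$, DG-$\mathcal{X}$-injective$)$ and (DG-$\mathcal{Y}$-projective,$\varepsilon_{\mathcal{Y}})$ are cotorsion pairs with $\mathcal{Y}=\mathcal{X}$-injective.

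The step I expect to be the real obstacle is the passage from the closure hypotheses to the covering (equivalently, precovering) property of $\mathcal{X}$: this is where the purity assumptions must be converted, via a cardinal bound or a deconstruction/generating-set argument in the spirit of the flat cover theorem, into the solution-set condition underlying El Bashir's theorem. Everything surrounding it --- the closure bookkeeping, Wakamatsu's and Salce's Lemmas, and the final invocation of Proposition~\ref{c} --- is routine.
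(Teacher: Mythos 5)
Your proposal is correct and takes essentially the same route as the paper: the paper likewise converts the purity hypotheses into the covering property (citing Holm--J{\o}rgensen \cite{7}, the same circle of results as El Bashir's theorem) to get an epic covering class containing the projectives, obtains a complete cotorsion pair via the Wakamatsu Lemma argument of Proposition \ref{c}, upgrades it to a perfect one using closure under direct limits, and then invokes Proposition \ref{c} for the complex-level pairs. Your write-up simply makes explicit the bookkeeping the paper leaves implicit (closure under direct limits and summands from pure quotients, the retract argument for $\mathcal{X}={}^{\bot}(\mathcal{X}^{\bot})$, and Salce's Lemma for completeness).
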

\begin{proof}By \cite{7},it is closed under direct limit and summands and thus it is covering class containing projective modules. Then $(\mathcal{X},\mathcal{X}-injective)$ is a complete cotorsion pair. Since $\mathcal{X}$ is closed under direct limit, it is a perfect cotorsion theory. By Proposition \ref{c} $(\varepsilon_{\mathcal{X}}$, DG-$\mathcal{X}$-injective) and (DG-$\mathcal{Y}$-projective,$\varepsilon_{\mathcal{Y}})$ are  cotorsion pairs. 
\end{proof}
\begin{cor}Let $\mathcal{X}$ be the class of finitely presented modules and $(\mathcal{X},\mathcal{X^{\bot}})$ be a cotorsion pair. Then $(\mathcal{X},\mathcal{X^{\bot}})$, $(\varepsilon_{\mathcal{X}}$, 
DG-$\mathcal{X}$-injective) and (DG-$\mathcal{Y}$-projective,$\varepsilon_{\mathcal{Y}})$ are cotorsion pairs where $\mathcal{Y}=\mathcal{X}$-injective.
\end{cor}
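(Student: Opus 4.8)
The plan is to notice first that the assertion `$(\mathcal{X},\mathcal{X}^{\bot})$ is a cotorsion pair' is nothing but the standing hypothesis, so the whole content is to lift this module-level pair to the two complex-level pairs. Since $\mathcal{X}$ is the class of finitely presented modules it is extension closed, which is exactly the hypothesis of Theorem \ref{t}; applying that theorem (which records both the injective and the projective identities) yields at once $\varepsilon_{\mathcal{X}}^{\bot}=\,$DG-$\mathcal{X}$-injective and ${}^{\bot}\varepsilon_{\mathcal{X}}=\,$DG-$\mathcal{X}$-projective. Thus each candidate pair already comes equipped with one of the two orthogonality identities that define a cotorsion pair.

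I would then identify the auxiliary class $\mathcal{Y}=\mathcal{X}$-injective. By the definition of $\mathcal{X}$-injectivity one has $\mathcal{X}$-injective $=\mathcal{X}^{\bot}$, so $\mathcal{Y}=\mathcal{X}^{\bot}$; being a right orthogonal class, $\mathcal{Y}$ is automatically extension closed, and so Theorem \ref{t} applies a second time, now to $\mathcal{Y}$, giving ${}^{\bot}\varepsilon_{\mathcal{Y}}=\,$DG-$\mathcal{Y}$-projective. A one-line computation then shows that being $\mathcal{Y}$-projective means lying in ${}^{\bot}(\mathcal{X}^{\bot})$, and since $(\mathcal{X},\mathcal{X}^{\bot})$ is a cotorsion pair we have ${}^{\bot}(\mathcal{X}^{\bot})=\mathcal{X}$; hence the $\mathcal{Y}$-projective modules are precisely the modules of $\mathcal{X}$. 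This makes $(\text{DG-}\mathcal{Y}\text{-projective},\varepsilon_{\mathcal{Y}})$ the exact complex-level counterpart, in the sense of \cite{5}, of $(\mathcal{X},\mathcal{X}^{\bot})$, just as $(\varepsilon_{\mathcal{X}},\text{DG-}\mathcal{X}\text{-injective})$ is.

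The only genuinely nontrivial step, and the main obstacle, is the reverse orthogonality: proving ${}^{\bot}(\text{DG-}\mathcal{X}\text{-injective})=\varepsilon_{\mathcal{X}}$ and $(\text{DG-}\mathcal{Y}\text{-projective})^{\bot}=\varepsilon_{\mathcal{Y}}$. The inclusions $\varepsilon_{\mathcal{X}}\subseteq{}^{\bot}(\varepsilon_{\mathcal{X}}^{\bot})$ and $\varepsilon_{\mathcal{Y}}\subseteq({}^{\bot}\varepsilon_{\mathcal{Y}})^{\bot}$ hold formally, so everything reduces to the opposite inclusions, i.e.\ to showing that $\varepsilon_{\mathcal{X}}$ (resp.\ $\varepsilon_{\mathcal{Y}}$) is closed under the double orthogonal. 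For this I would invoke Corollary 3.13 of \cite{5}, which establishes precisely that a module cotorsion pair $(\mathcal{X},\mathcal{X}^{\bot})$ induces a cotorsion pair of complexes, the finitely presented hypothesis being what guarantees that the relevant class of complexes is cogenerated by a set so that the orthogonal closure collapses back to $\varepsilon_{\mathcal{X}}$. This is exactly the place where Proposition \ref{c} had to manufacture completeness out of the precovering and direct-limit hypotheses; here those hypotheses are avoided because $(\mathcal{X},\mathcal{X}^{\bot})$ is assumed to be a cotorsion pair from the outset. The dual pair then follows by the dual of the same argument, so that all three pairs are cotorsion pairs and the proof is complete.
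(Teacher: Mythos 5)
Your proposal is correct in substance, but it reaches the conclusion by a genuinely different route than the paper, and it contains one citation slip worth fixing. The paper's proof spends all of its effort upgrading the hypothesis ``$(\mathcal{X},\mathcal{X}^{\bot})$ is a cotorsion pair'' to ``$(\mathcal{X},\mathcal{X}^{\bot})$ is a \emph{complete} cotorsion pair'': because $\mathcal{X}$ consists of finitely presented modules, $\mathcal{X}^{\bot}=\mathcal{X}$-injective is closed under pure submodules, direct products and inverse limits, hence is an enveloping class; by Wakamatsu's lemma the envelopes are special, with cokernel in ${}^{\bot}(\mathcal{X}^{\bot})=\mathcal{X}$, so the pair is complete. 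The passage from the complete module pair to the two complex pairs is then left implicit, exactly as in Proposition \ref{c} (which, note, cannot simply be quoted here: finitely presented modules are not closed under direct limits, so its hypotheses fail). Your proof instead splits each complex pair into its two orthogonality identities: Theorem \ref{t}, legitimately applied twice since both $\mathcal{X}$ and $\mathcal{Y}=\mathcal{X}^{\bot}$ are extension closed, gives the ``easy halves'' $\varepsilon_{\mathcal{X}}^{\bot}=$ DG-$\mathcal{X}$-injective and ${}^{\bot}\varepsilon_{\mathcal{Y}}=$ DG-$\mathcal{Y}$-projective, and your observation that $\mathcal{Y}$-projective $={}^{\bot}(\mathcal{X}^{\bot})=\mathcal{X}$ is precisely what identifies the paper's pairs with the two pairs induced, in Gillespie's sense, by $(\mathcal{X},\mathcal{X}^{\bot})$ --- a point the paper never makes explicit. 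For the ``hard halves'' you verify Gillespie's hypothesis by set-cogeneration (finitely presented modules form a set up to isomorphism), where the paper verifies completeness instead; your verification is arguably the more direct one, since cogeneration by a set is literally the hypothesis of the theorem of \cite{5} that both arguments ultimately rest on.

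The slip: the result you need from \cite{5} is Theorem 3.12 there (a module cotorsion pair cogenerated by a set induces two complete cotorsion pairs in $Ch(R)$), not Corollary 3.13. Corollary 3.13 is the statement, for hereditary pairs, giving heredity of the induced pairs and the identity DG-$\mathcal{X}$-injective $\cap\ \varepsilon=\varepsilon_{\mathcal{X}-injective}$; this is exactly how the paper itself cites it in the proof of Proposition \ref{c}, and it does not assert that any pair of classes is a cotorsion pair. Since the mechanism you describe --- finitely presented $\Rightarrow$ cogenerated by a set $\Rightarrow$ the orthogonal of DG-$\mathcal{X}$-injective collapses back to $\varepsilon_{\mathcal{X}}$ --- is precisely the content of Theorem 3.12, this is a misattribution rather than a gap; with the reference corrected, your argument is complete.
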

\begin{proof}Since $\mathcal{X}$ is the class of finitely presented modules, $\mathcal{X^{\bot}}=\mathcal{X}-injective$ is closed under pure submodules. Moreover it is closed under direct product and inverse limit. So  $\mathcal{X^{\bot}}$ is an enveloping class by \cite{2}. Since $(\mathcal{X},\mathcal{X^{\bot}})$ is a cotorsion pair, $\mathcal{X^{\bot}}$ is a special enveloping class with cokernel in $\mathcal{X}$. So again by \cite{2} $(\mathcal{X},\mathcal{X^{\bot}})$ is a complete cotorsion pair. 
\end{proof}

\section{C($\mathcal{X}$-(f.g.)projective)-precovers and C($\mathcal{X}$-(f.g.)injective)-preenvelopes }
In this section we prove that if a complex has a C($\mathcal{X}$-projective)-precover or C($\mathcal{X}$-injective)-preenvelope in C($\mathcal{X}^{*}$), then such precovers  or preenvelopes  are homotopic.  Moreover we investigate when a complex has an exact C($\mathcal{X}$-(f.g.)projective((f.g.)injective))-
precover(preen-\\velope) and we give a condition when an $\mathcal{X}$-(f.g.)projective((f.g.)injec-\\tive) complex is exact . 
\begin{lem} \label{1.20} Let $f:X \longrightarrow Y$ be a chain
morphism, $X$ is an $\mathcal{X^{*}}$ complex and $Y$ is an
$\mathcal{X}-injective$ complex. Then $f$ is homotopic to zero. Moreover if a complex 
has a C($\mathcal{X}$-injective)-preenvelope in $C(\mathcal{X^{*}})$, then such preenvelopes are homotopic.
\end{lem}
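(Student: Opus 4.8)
The plan is to prove the two assertions separately, since the second follows formally from the first once a preenvelope is in hand. First I would establish that any chain map $f: X \to Y$ with $X \in C(\mathcal{X}^*)$ and $Y$ an $\mathcal{X}$-injective complex is null-homotopic. The natural route is to reinterpret homotopy-to-zero as exactness of the $\mathcal{H}om$ complex, exactly as in Lemma \ref{1.13}: it suffices to show $Ext^1(X, Y[n]) = 0$ for all $n \in \mathbb{Z}$, because by Lemma \ref{1.12} the associated mapping-cone sequence $0 \to Y[n] \to M(f) \to X[1] \to 0$ splits precisely when $f$ is homotopic to zero. So the key step is to verify the $Ext$-vanishing. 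Here I would use the defining property of $\mathcal{X}$-injectivity of $Y$ (Definition \ref{1.3}): since $X \in C(\mathcal{X}^*)$, the quotient $X[1]$ is again an $\mathcal{X}^*$-complex, and any short exact sequence $0 \to Y[n] \to Z \to X[1] \to 0$ computing $Ext^1(X[1], Y[n])$ has the shape required by the equivalent formulation of $\mathcal{X}$-injectivity, forcing the sequence $Hom(Z, Y[n]) \to Hom(Y[n], Y[n]) \to 0$ to be surjective and hence the extension to split.

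For the shift bookkeeping I would be slightly careful: to conclude $f$ itself is null-homotopic I want $Ext^1(X[1], Y[n]) = 0$, and a shift does not leave $C(\mathcal{X}^*)$ — a shift of an $\mathcal{X}^*$-complex is again an $\mathcal{X}^*$-complex, so this causes no trouble. Combining the vanishing across all $n$ with Lemma \ref{1.13} (or its mapping-cone argument directly) yields that $\mathcal{H}om(X, Y)$ is exact, which is exactly the statement that $f$ is homotopic to zero.

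For the second assertion, suppose a complex $C$ has two C($\mathcal{X}$-injective)-preenvelopes $\varphi: C \to Y_1$ and $\psi: C \to Y_2$, both lying in $C(\mathcal{X}^*)$ in the sense that $Y_1, Y_2 \in C(\mathcal{X}^*)$. By the preenvelope property of $\varphi$ there is $g: Y_1 \to Y_2$ with $g\varphi = \psi$, and symmetrically $h: Y_2 \to Y_1$ with $h\psi = \varphi$. Then $(hg)\varphi = \varphi$ and $(gh)\psi = \psi$, so $hg - \mathrm{id}_{Y_1}$ and $gh - \mathrm{id}_{Y_2}$ both kill the image of the preenvelope maps. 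I would then apply the first part: since $Y_1 \in C(\mathcal{X}^*)$ is an $\mathcal{X}^*$-complex and $Y_1$ is also $\mathcal{X}$-injective, the chain map $hg - \mathrm{id}_{Y_1}: Y_1 \to Y_1$ is homotopic to zero, i.e. $hg \simeq \mathrm{id}_{Y_1}$, and likewise $gh \simeq \mathrm{id}_{Y_2}$. Hence $g$ and $h$ are mutually inverse homotopy equivalences, so the two preenvelopes are homotopic, as claimed.

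The main obstacle I anticipate is the first step — pinning down the $Ext^1$-vanishing cleanly. The subtlety is that $\mathcal{X}$-injectivity of $Y$ is defined against quotient complexes $Y/X \in C(\mathcal{X}^*)$, so I must match the extension $0 \to Y[n] \to Z \to X[1] \to 0$ to that template and confirm that the relevant cokernel genuinely lies in $C(\mathcal{X}^*)$ after shifting; this is where the hypothesis $X \in C(\mathcal{X}^*)$ is used essentially and where a careless index shift could break the argument. Everything after that — the mapping-cone/homotopy translation via Lemmas \ref{1.12} and \ref{1.13}, and the purely formal uniqueness-up-to-homotopy deduction — is routine.
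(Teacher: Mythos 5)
Your proposal is correct, but it runs along a slightly different track than the paper's own proof. The paper proves the first assertion constructively: it forms the mapping cone of the \emph{identity}, $0 \to X \xrightarrow{i} M(id_X) \to X[1] \to 0$, notes that the quotient $X[1]$ lies in $C(\mathcal{X}^*)$, uses the lifting formulation of $\mathcal{X}$-injectivity of $Y$ to extend $f$ to $g: M(id_X) \to Y$ with $gi = f$, and then writes the homotopy explicitly as $s^n = g^{n-1} i_1^{n-1}$, verifying $s^{n+1}\lambda^n + \gamma^{n-1}s^n = f^n$ by hand. You instead work with the cone of $f$ itself: you derive $Ext^1(X[1], Y[n]) = 0$ from the definition of $\mathcal{X}$-injectivity (your care about shifts staying inside $C(\mathcal{X}^*)$ is exactly the right point, and it is the same fact the paper uses), and then invoke Lemma \ref{1.12} to split $0 \to Y \to M(f) \to X[1] \to 0$. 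Both arguments pivot on the same input; the paper's version is self-contained and produces the homotopy formula, while yours is shorter, reuses Lemmas \ref{1.12} and \ref{1.13} already available in the paper, and in fact yields the stronger conclusion that $\mathcal{H}om(X,Y)$ is exact --- though note that for the stated claim only the case $n=0$, i.e.\ $Ext^1(X[1],Y)=0$ plus Lemma \ref{1.12}, is needed, so the detour through Lemma \ref{1.13} is dispensable. A further genuine difference: the paper's proof silently omits the ``moreover'' clause about preenvelopes, whereas you prove it; your argument via $g\varphi = \psi$, $h\psi = \varphi$ and $hg \simeq id_{Y_1}$, $gh \simeq id_{Y_2}$ is correct, and one can even remark that under these hypotheses the first part forces $id_{Y_1}$ and $id_{Y_2}$ themselves to be null-homotopic, so both preenvelopes are contractible and the homotopy equivalence is immediate.
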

\begin{proof}Let $id:X \longrightarrow X$, then we have the
following exact sequence;
\[\begin{diagram}
\node{0} \arrow{e} \node{X} \arrow{e,t} {i} \arrow{s,r} {f}
\node{M(id)} \arrow{e} \arrow{sw,r,..} {g} \node{X[1]} \arrow{e}
\node{0}\\
\node[2]{Y}
\end{diagram}\]
\noindent where ${g}{i}={f}$.
Let $i_{1}^{n}:X[1]^{n} \longrightarrow M(id)^{n}$ be canonical injection and $s^{n}:X[1]^{n-1} \longrightarrow Y^{n-1}$
such that $s^{n}={g^{n-1}}{i_{1}^{n-1}}$ for all $n\in \mathbb{Z}$ . Let u be the differential of the complex $M(id)$. Then we have the following diagram as follow.
\[\begin{diagram}
\node{{X^{n-1}}\oplus {X^{n-2}}} \arrow{e,t} {u^{n-2}} \arrow{s,r}
{g^{n-2}} \node{{X^{n}}\oplus {X^{n-1}}} \arrow{e,t} {u^{n-1}}
\arrow{s,r} {g^{n-1}} \node{{X^{n+1}}\oplus {X^{n}}} \arrow{s,r}
{g^{n}}\\
\node{Y^{n-2}} \arrow{e,t} {\gamma^{n-2}} \node{Y^{n-1}} \arrow{e,t} {\gamma^{n-1}} \node{Y^{n}}
\end{diagram}\]
${s^{n+1}}{\lambda^{n}}+{\gamma^{n-1}}{s^{n}}={g^{n}}{i_{1}^{n}}{\lambda^{n}}+{\gamma^{n-1}}{g^{n-1}}{i_{1}^{n-1}}
={g^{n}}{i_{1}^{n}}{\lambda^{n}}+{g^{n}}{u^{n-1}}{i_{1}^{n-1}}={g^{n}}({i_{1}^{n}}{\lambda^{n}}+{u^{n-1}}{i_{1}^{n-1}})
={g^{n}}{i^{n}}={f^{n}}$.
\end{proof}
\begin{lem} \label{1.19} Let $f:X \longrightarrow Y$ a chain homomorphism such that $Y$ is an
$\mathcal{X^{*}}$ complex and X is an $\mathcal{X}$-projective complex.
Then $f$ is a homotopic to zero. Moreover if a complex 
has a C($\mathcal{X}$-projective)-precover in $C(\mathcal{X^{*}})$, then such precovers are homotopic.
\end{lem}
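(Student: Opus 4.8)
The plan is to establish Lemma \ref{1.19} as the exact dual of Lemma \ref{1.20}, which has just been proved. First I would set up the relevant mapping cone exact sequence. For a chain map $f:X\longrightarrow Y$ with $X$ an $\mathcal{X}$-projective complex and $Y$ an $\mathcal{X^{*}}$ complex, I would consider the mapping cone associated to $\mathrm{id}:X\longrightarrow X$, giving the short exact sequence
\[0\longrightarrow X\longrightarrow M(\mathrm{id})\longrightarrow X[1]\longrightarrow 0.\]
The key point is that this sequence is degreewise split with kernels $X^{n}\in\mathcal{X}$, so it is an $\varepsilon_{\mathcal{X}}$-type sequence. Since $X$ is $\mathcal{X}$-projective, $Ext^{1}$ against objects built from $\mathcal{X}$ vanishes, and hence (in the dual spirit of Lemma \ref{1.6}) I can lift $f$ through $M(\mathrm{id})$; that is, there exists a chain map $g:M(\mathrm{id})\longrightarrow Y$ with $gi=f$, where $i:X\longrightarrow M(\mathrm{id})$ is the canonical inclusion.

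Next I would extract the homotopy from $g$ exactly as in the proof of Lemma \ref{1.20}, but with all arrows reversed. Writing $i_{1}^{n}:X[1]^{n}\longrightarrow M(\mathrm{id})^{n}$ for the canonical injection (or here, dually, using the canonical projection $M(\mathrm{id})\longrightarrow X[1]$ to define the pieces), I would set $s^{n}=g^{n-1}i_{1}^{n-1}$ and verify by the same diagram chase, using the explicit form $u^{n-1}$ of the differential on the cone, that $s^{n+1}\lambda^{n}+\gamma^{n-1}s^{n}=f^{n}$ for all $n\in\mathbb{Z}$. This is precisely the statement that $f$ is homotopic to zero. The calculation is formally identical to the displayed computation in Lemma \ref{1.20} with the roles of source and target interchanged, so I expect it to go through with no new ideas.

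Finally I would deduce the homotopy-uniqueness of precovers. Suppose $\phi:P\longrightarrow C$ and $\phi':P'\longrightarrow C$ are two C($\mathcal{X}$-projective)-precovers of a complex $C$, with $P,P'$ in $C(\mathcal{X^{*}})$. By the precover property there exist chain maps $h:P\longrightarrow P'$ and $h':P'\longrightarrow P$ with $\phi'h=\phi$ and $\phi h'=\phi'$. Then $\phi(\mathrm{id}_{P}-h'h)=0$ and $\phi'(\mathrm{id}_{P'}-hh')=0$, so $\mathrm{id}_{P}-h'h$ and $\mathrm{id}_{P'}-hh'$ factor through the kernels, and the first part of the lemma (applied to these maps, whose source is $\mathcal{X}$-projective and target lies in $C(\mathcal{X^{*}})$) shows each is homotopic to zero. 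Hence $h'h\sim \mathrm{id}_{P}$ and $hh'\sim \mathrm{id}_{P'}$, so $h$ is a homotopy equivalence and the two precovers are homotopic.

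The main obstacle is the first step: justifying that $f$ lifts through the cone, i.e.\ that the relevant $Ext^{1}$ vanishes against the object $X[1]$ assembled from the modules $X^{n}\in\mathcal{X}$. One must check that the degreewise-split sequence $0\to X\to M(\mathrm{id})\to X[1]\to 0$ is genuinely of the type against which $\mathcal{X}$-projectivity of $X$ guarantees splitting after applying $\mathcal{H}om(X,-)$, which is exactly the content needed to realize $f$ as $gi$. Once that lift exists, the remainder is the purely mechanical dualization of Lemma \ref{1.20}.
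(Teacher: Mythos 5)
Your first step contains a genuine gap, and it sits exactly at the point you yourself flag as ``the main obstacle.'' You form the cone sequence $0\to X\to M(\mathrm{id}_X)\to X[1]\to 0$ and justify the existence of the extension $g:M(\mathrm{id}_X)\to Y$ with $gi=f$ by claiming this sequence ``is degreewise split with kernels $X^{n}\in\mathcal{X}$,'' and later that $X[1]$ is ``assembled from the modules $X^{n}\in\mathcal{X}$.'' That premise is false in the setting of Lemma \ref{1.19}: here it is $Y$, not $X$, that is the $\mathcal{X^{*}}$ complex, while $X$ is only assumed $\mathcal{X}$-projective, and the paper warns explicitly (in the remark following Example \ref{1.4}) that an $\mathcal{X}$-projective complex need not be an $\mathcal{X^{*}}$-complex. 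You have carried over the hypotheses of Lemma \ref{1.20}, where the cone is taken on the identity of the $\mathcal{X^{*}}$ complex, without dualizing them; as written, nothing you say yields $Ext^{1}(X[1],Y)=0$, so the existence of $g$ --- the one nontrivial step --- is unproved.

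The gap can be closed in two ways. (a) Your route is salvageable with a different justification: the class of $\mathcal{X}$-projective complexes is shift-invariant, since $Ext^{1}(X[1],Z)\cong Ext^{1}(X,Z[-1])$ and $C(\mathcal{X^{*}})$ is closed under shifts; hence $X[1]$ is $\mathcal{X}$-projective, and because $Y\in C(\mathcal{X^{*}})$, the vanishing $Ext^{1}(X[1],Y)=0$ is literally the definition of $\mathcal{X}$-projectivity. With $g$ in hand, your extraction of the homotopy $s^{n}=g^{n-1}i_{1}^{n-1}$ goes through exactly as in Lemma \ref{1.20}. (b) The paper instead performs the honest dualization: it takes the cone on $\mathrm{id}_Y$, i.e. $0\to Y[-1]\to M(\mathrm{id}_Y)[-1]\stackrel{\pi}{\longrightarrow} Y\to 0$, whose kernel $Y[-1]$ lies in $C(\mathcal{X^{*}})$, lifts $f$ through the epimorphism $\pi$ using $Ext^{1}(X,Y[-1])=0$ (the definition of $\mathcal{X}$-projectivity of $X$ applied directly, with no shift argument needed), and defines $s^{n}=\pi_{1}^{n}g^{n}$ with $\pi_{1}$ the degreewise projection onto $Y[-1]$. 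Your treatment of the ``moreover'' part --- comparison maps $h,h'$ between two precovers, with $\mathrm{id}_P-h'h$ and $\mathrm{id}_{P'}-hh'$ null-homotopic by the first part since their sources are $\mathcal{X}$-projective and their targets lie in $C(\mathcal{X^{*}})$ --- is correct and is the standard argument, which the paper leaves implicit.
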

\begin{proof} Let $id:Y \longrightarrow Y$ and the exact sequence $0 \longrightarrow Y[-1] \longrightarrow M(id)[-1]
\longrightarrow Y \longrightarrow 0$.
Since $X$ is an $\mathcal{X}-projective$ complex, we have the
following commutative diagram;
\[\begin{diagram}
\node{M(id)[-1]} \arrow{e,r} {\pi} \node{Y} \arrow{e} \node{0}\\
\node{X} \arrow{n,r,..} {g} \arrow{ne,t} {f}
\end{diagram}\]
\noindent where ${\pi}{g}={f}$. Let $\pi_{1}^{n}:M(id)[-1]^{n} \longrightarrow
Y[-1]^{n}$ be projection for all $n\in \mathbb{Z}$. Then if we take as $s^{n}={\pi_{1}^{n}}{g^{n}}$, then for
all $n\in \mathbb{Z}$,
${s^{n+1}}{\lambda^{n}}+{\gamma^{n-1}}{s^{n}}={f^{n}}$ where
$\lambda$ and $\gamma$ are boundary maps of the complexes of $X$ and
$Y$, respectively. So $f$ is homotopic to zero.
\end{proof}
\begin{lem} \label{1.24} Let $\mathcal{X}$ be extension closed. Let every R-module have an epic $\mathcal{X}$-projective 
precover  with kernel in $\mathcal{X}$. Then every bounded complex in C($\mathcal{X}^{*}$) has an epic exact C($\mathcal{X}-projective$)-precover (which is also in $\varepsilon_{\mathcal{X}-projective}$ if $(\mathcal{X},\mathcal{X^{\bot}})$ is a complete cotorsion pair) with kernel in C($\mathcal{X}^{*}$) (which is also in DG-$\mathcal{X}-projective$-injective=$(\varepsilon_{\mathcal{X}-projective})^{\bot}$). Moreover if $\mathcal{X}$ is a class of finitely generated modules, then every bounded complex in C($\mathcal{X}^{*}$) has an epic exact C($\mathcal{X}-f.g.projective$)-precover.   Thus every bounded $\mathcal{X}-(f.g.)projective$ complex in C($\mathcal{X}^{*}$) is exact (in $\varepsilon_{\mathcal{X}-projective}$ if $(\mathcal{X},\mathcal{X^{\bot}})$ is a cotorsion pair). 
\end{lem}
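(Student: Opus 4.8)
The plan is to argue by induction on the number of nonzero terms of the bounded complex $Y\in C(\mathcal{X}^{*})$, producing at each stage a short exact sequence $0\to L\to P\xrightarrow{f} Y\to 0$ in which $P$ is an exact $\mathcal{X}$-projective complex, $f$ is epic, and $L\in C(\mathcal{X}^{*})$. The one module-level input is this consequence of extension closedness: if $N\in\mathcal{X}$ and $0\to K\to P\to N\to 0$ is its epic $\mathcal{X}$-projective precover with $K\in\mathcal{X}$, then $P$ is an extension of $N$ by $K$ and hence $P\in\mathcal{X}$; so every module of $\mathcal{X}$ has a precover by a module in $\mathcal{X}\cap(\mathcal{X}\text{-projective})$. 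For the base case $Y=M[m]$ with $M\in\mathcal{X}$, I take such a precover $\pi\colon P\to M$ and map the disk complex $\overline{P}$ (exact and $\mathcal{X}$-(f.g.)projective by Example \ref{1.4}), placed in degrees $m,m+1$, onto $Y$ by $(f^{m},f^{m+1})=(\pi,0)$. This is a chain map because $d_{Y}^{m}\pi=0$, it is epic, and its kernel is $[\ker\pi\hookrightarrow P]$ in degrees $m,m+1$; both terms lie in $\mathcal{X}$, so the kernel is in $C(\mathcal{X}^{*})$.

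For the inductive step I peel off the bottom term with the brutal-truncation sequence $0\to Y''\to Y\to Y'\to 0$, where $Y'$ is a single module and $Y''$ is shorter, both in $C(\mathcal{X}^{*})$. The inductive hypothesis on $Y''$ and the base case on $Y'$ give sequences $0\to L''\to P''\to Y''\to 0$ and $0\to L'\to P'\to Y'\to 0$ of the required type, and a horseshoe construction assembles them into a degreewise split extension $0\to P''\to P\to P'\to 0$ equipped with an epic $f\colon P\to Y$ whose kernel $L$ fits into $0\to L''\to L\to L'\to 0$. Then $P$ is exact (extension of exact complexes) with $\mathcal{X}$-projective terms, and degreewise $0\to (L'')^{j}\to L^{j}\to (L')^{j}\to 0$ with $(L'')^{j},(L')^{j}\in\mathcal{X}$ forces $L^{j}\in\mathcal{X}$ by extension closedness. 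This is exactly the point where extension closedness, rather than closure under kernels of epimorphisms, is needed: the connecting differential supplied by the horseshoe presents each $L^{j}$ as an extension inside $C(\mathcal{X}^{*})$, whereas a naive direct sum of disk complexes would instead present $L^{j}$ as the kernel of an epimorphism and leave $C(\mathcal{X}^{*})$.

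It remains to see that $f\colon P\to Y$ is a $C(\mathcal{X}\text{-projective})$-precover. Given an $\mathcal{X}$-projective complex $Q$ and a chain map $g\colon Q\to Y$, each row $0\to L^{j}\to P^{j}\to Y^{j}\to 0$ has $L^{j}\in\mathcal{X}$ and $Q^{j}$ $\mathcal{X}$-projective, so $Ext^{1}(Q^{j},L^{j})=0$ and $g^{j}$ lifts degreewise to $h^{j}\colon Q^{j}\to P^{j}$. The defect $h d_{Q}-d_{P}h$ takes values in $L$ and defines a morphism $Q\to L[1]$; since $Q$ is $\mathcal{X}$-projective and $L\in C(\mathcal{X}^{*})$, this morphism is null-homotopic by Lemma \ref{1.19}, and absorbing the homotopy into $h$ yields a genuine chain-map lift of $g$. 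Thus $f$ is a precover. The finitely generated statement is obtained verbatim by choosing each precovering module finitely generated, so that all the complexes $\overline{P}$, $P$ and $L$ are bounded complexes of finitely generated modules.

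For the refinement, the cycles of a disk complex are $0$ and an $\mathcal{X}$-projective module, and the cycles of the horseshoe extension $P$ sit in short exact sequences of cycles of $P''$ and $P'$; as $\mathcal{X}\text{-projective}={}^{\bot}\mathcal{X}$ is extension closed, all cycles of $P$ are $\mathcal{X}$-projective and $P\in\varepsilon_{\mathcal{X}\text{-projective}}$, while the identification of $L$ with a member of $(\varepsilon_{\mathcal{X}\text{-projective}})^{\bot}$ is then read off from the cotorsion-pair statements of Theorem \ref{t} and Proposition \ref{c} once $(\mathcal{X},\mathcal{X}^{\bot})$ is a complete cotorsion pair. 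Finally the closing assertion is immediate: if $Q\in C(\mathcal{X}^{*})$ is a bounded $\mathcal{X}$-(f.g.)projective complex, its exact precover gives $0\to L\to P\to Q\to 0$ with $L\in C(\mathcal{X}^{*})$ (finitely generated in the f.g.\ case), so $Ext^{1}(Q,L)=0$ by the definition of $\mathcal{X}$-(f.g.)projective, the sequence splits, and $Q$ is a direct summand of the exact complex $P$, hence exact. I expect the main obstacle to be the horseshoe bookkeeping in the inductive step, namely arranging simultaneously that $P$ stays exact, that $f$ stays epic, and that the connecting differential realizes each $L^{j}$ as an extension within $C(\mathcal{X}^{*})$, together with checking that the degreewise homotopy correction in the precover step is consistent across all degrees.
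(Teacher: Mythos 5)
Your proof is correct and is essentially the paper's own argument: induction on the length of the bounded complex, assembling module-level $\mathcal{X}$-projective precovers (which lie in $\mathcal{X}$ by extension closedness) into disk complexes glued on one per step, with extension closedness used again to keep the kernel in $C(\mathcal{X}^{*})$ --- your horseshoe step is exactly the paper's explicit construction of the differentials $\lambda^{n}$ and $\lambda^{n+1}_{1}$ (the paper peels off the top term and glues the new disk using the precover property, you peel off the bottom term and lift through the truncation, but the resulting precovering complexes coincide), and your splitting argument for the final exactness claim is the intended one. The only point to state explicitly is that in the inductive step $P$ is an $\mathcal{X}$-projective \emph{complex}, not merely a complex with $\mathcal{X}$-projective terms --- which does hold, since $P$ is an extension of the $\mathcal{X}$-projective complexes $P''$ and $P'$ and $C(\mathcal{X}\text{-projective})$ is closed under extensions and direct sums as in Example \ref{1.4} --- because this is what the definition of a $C(\mathcal{X}\text{-projective})$-precover requires and what your induction hypothesis asserts.
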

\begin{proof} Let $Y(n): ... \rightarrow 0 \rightarrow Y^{0} \rightarrow Y^{1} \rightarrow ... \rightarrow Y^{n} \rightarrow
0 \rightarrow...\in C(\mathcal{X}^{*})$.
We use induction on n. Let $n=0$, then we have the following
commutative diagram;
\[\begin{diagram}
\node{D(0):...} \arrow{e} \node{0} \arrow{e} \arrow{s} \node{P^{0}}
\arrow{e,t} {id} \arrow{s,r} {f^{0}} \node{P^{0}} \arrow{e}
\arrow{s} \node{0} \arrow{e}
\arrow{s} \arrow{e} \node{...} \\
\node{Y(0):...} \arrow{e} \node{0} \arrow{e} \node{Y^{0}} \arrow{e}
\node{0} \arrow{e} \node{0}
\arrow{e} \node{...}
\end{diagram}\]
\noindent where $P^{0}\rightarrow Y^{0}\rightarrow 0$ is an $\mathcal{X}$-projective precover in $\mathcal{X}$  with kernel in $\mathcal{X}$ since $\mathcal{X}$ is extension closed , $D(0)$ is exact and $Ker(D(0) \rightarrow Y(0)) \in
\mathcal{C(X^{*})}$.
\noindent We assume the following diagram which is commutative;
\[\begin{diagram}
\node{D(n):...0} \arrow{e} \node{P^{0}} \arrow{e,t} {\lambda^{0}}
\arrow{s,r} {f^{0}} \node{P^{0} \oplus P^{1}} \arrow{e,t}
{\lambda^{1}} \arrow{s,r} {(0,f^{1})} 
\node{...P^{n-1} \oplus P^{n}} \arrow{e,t} {\lambda_{1}^{n}}
\arrow{s,r} {(0,f^{n})} \node{P^{n}...}  \arrow{s} \\
\node{Y(n):...0} \arrow{e} \node{Y^{0}} \arrow{e,t} {a^{0}}
\node{Y^{1}} \arrow{e,t} {a^{1}} \node{...Y^{n}}
\arrow{e} \node{0...} 
\end{diagram}\]
\noindent where $\lambda_{1}^{n}$ is onto, $D(n)$ is an exact C($\mathcal{X}-projective$)-precover of $Y(n)$ such that  $Ker(D(n) \rightarrow Y(n)) \in
\mathcal{C(X^{*})}$ and the $P^{i}\rightarrow Y^{i}\rightarrow 0$ are $\mathcal{X}$-projective precovers in $\mathcal{X}$  with kernels in $\mathcal{X}$ for $1\leq i\leq n$.
Since $D(n)\rightarrow Y(n)\rightarrow 0$ and $\overline{P^{n+1}}\rightarrow \underline{Y^{n+1}}\rightarrow 0$ are C($\mathcal{X}-projective$)-precovers, we have the following commutative diagram
\[\begin{diagram}
\node{D(n)} \arrow{e,t} {s} \arrow{s} \node{\overline{P^{n+1}}}
\arrow{s} \\
\node{Y(n)} \arrow{e} \node{\underline{Y^{n+1}}}
\end{diagram}\]
\noindent Thus we have the diagram as follow:
\[\begin{diagram}
\node{D(n):...0} \arrow{e} \node{P^{0}} \arrow{e,t} {\lambda^{0}}
\arrow{s} \node{P^{0} \oplus P^{1}...} 
\arrow{s} \arrow{e,t} {\lambda^{n-1}} \node{P^{n-1}
\oplus P^{n}} \arrow{e,t} {\lambda_{1}^{n}}
\arrow{s,r} {s^{1}} \node{P^{n}...}  \arrow{s,r} {s^{2}}  \\
\node{\overline{P^{n+1}}:...0} \arrow{e} \node{0} \arrow{e} \node{0...}
\arrow{e} \node{P^{n+1}}
\arrow{e,t} {1} \node{P^{n+1}...} 
\end{diagram}\]
\noindent where ${s^{2}}{\lambda_{1}^{n}}={s^{1}}$ and
${s^{1}}{\lambda^{n-1}}=0$.
Moreover we see that ${f^{n+1}}{s^{1}}={a^{n}}{(0,f^{n})}$ and ${f^{n+1}}{s^{2}}=0$ by the following diagrams:
\[\begin{diagram}
\node{P^{n-1} \oplus P^{n}} \arrow{e,t} {s^{1}} \arrow{s,r}
{(0,f^{n})} \node{P^{n+1}}
\arrow{s,r} {f^{n+1}} \\
\node{Y^{n}} \arrow{e,t} {a^{n}} \node{Y^{n+1}} 
\end{diagram}\]
\[\begin{diagram}
\node{P^{n}} \arrow{e,t} {s^{2}} \arrow{s} \node{P^{n+1}}
\arrow{s,r} {f^{n+1}} \\
\node{0} \arrow{e} \node{Y^{n+1}} 
\end{diagram}\]
Let $\lambda^{n}(x,y)=(\lambda_{1}^{n}(x,y),s^{1}(x,y))$, \;
$\lambda_{1}^{n+1}(x,y)=s^{2}(x)-y$. Then we have the commutative diagram:
\[\begin{diagram}
\node{D(n+1):...}\arrow{e} \node{P^{0}...} 
\arrow{s,r} {f^{0}}  \arrow{e}
\node{P^{n-1} \oplus P^{n}}
\arrow{e,t} {\lambda^{n}}
\arrow{s,r} {(0,f^{n})} \node{P^{n} \oplus P^{n+1}} \arrow{e,t} {\lambda_{1}^{n+1}} \arrow{s,r} {(0,f^{n+1})} \node{P^{n+1}...}\arrow{s}\\
\node{Y(n+1):...}\arrow{e} \node{Y^{0}...} \arrow{e}
 \node{Y^{n}}
\arrow{e,t} {a^{n}} \node{Y^{n+1}}\arrow{e} \node{0...}
\end{diagram}\]
\noindent where  $Ker(D(n+1) \rightarrow
Y(n+1)) \in C(\mathcal{X}^{*})$ and since $\lambda_{1}^{n+1}$ is onto, $Im (\lambda^{n})=Ker(\lambda_{1}^{n+1})$, $Im (\lambda^{n-1})=Ker(\lambda^{n})$ and $D(n)$ is exact, $D(n+1)$ is exact. 
Therefore, $Y(n)$ has a C($\mathcal{X}$-projective) precover.
\end{proof}
\noindent We know that the direct (inverse) limit of exact complexes is also exact. Then we can give the following.
\begin{cor}\label{core}Let $(\mathcal{X},\mathcal{X^{\bot}})$ be a complete cotorsion pair. Then every bounded complex in C($\mathcal{X}^{*}$)  has an $\varepsilon_{\mathcal{X}-projective}$-precover.
\end{cor}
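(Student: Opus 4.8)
The plan is to read the statement as the natural corollary of Lemma \ref{1.24}, and the work splits into checking that its hypotheses are available and then upgrading the precover it produces. First I would verify that the assumptions of Lemma \ref{1.24} hold under the single hypothesis that $(\mathcal{X},\mathcal{X}^{\bot})$ is a complete cotorsion pair: the left-hand class $\mathcal{X}$ of any cotorsion pair is automatically closed under extensions, and completeness of the pair is exactly what furnishes, for every $R$-module, the epic $\mathcal{X}$-projective precover with kernel controlled by the pair that Lemma \ref{1.24} requires. With these in hand, Lemma \ref{1.24} gives, for each bounded $Y \in C(\mathcal{X}^{*})$, an epic exact $C(\mathcal{X}-projective)$-precover $\varphi : P \longrightarrow Y$, and its parenthetical conclusion (valid precisely because $(\mathcal{X},\mathcal{X}^{\bot})$ is complete) places $P$ in $\varepsilon_{\mathcal{X}-projective}$. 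The exactness of $P$ that this needs is guaranteed by the remark preceding the statement, since $P$ is assembled from the bounded exact complexes $D(n)$ of the induction and direct (inverse) limits of exact complexes remain exact.

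The second step is to promote this $C(\mathcal{X}-projective)$-precover to an $\varepsilon_{\mathcal{X}-projective}$-precover. Here I would invoke the elementary principle that if $\mathcal{A} \subseteq \mathcal{B}$ are classes of complexes and $\varphi : P \to Y$ is a $\mathcal{B}$-precover with $P \in \mathcal{A}$, then $\varphi$ is already an $\mathcal{A}$-precover: the object $P$ lies in $\mathcal{A}$ by hypothesis, and any morphism $g : Q \to Y$ with $Q \in \mathcal{A} \subseteq \mathcal{B}$ factors through $\varphi$ by the defining property of a $\mathcal{B}$-precover. Taking $\mathcal{A} = \varepsilon_{\mathcal{X}-projective}$ and $\mathcal{B} = C(\mathcal{X}-projective)$, and feeding in the inclusion $\varepsilon_{\mathcal{X}-projective} \subseteq C(\mathcal{X}-projective)$ recorded in Example \ref{1.4}, immediately yields that $\varphi$ is an $\varepsilon_{\mathcal{X}-projective}$-precover, which is the assertion.

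The main obstacle I anticipate is justifying the inclusion $\varepsilon_{\mathcal{X}-projective} \subseteq C(\mathcal{X}-projective)$ in the generality the statement seems to claim. Example \ref{1.4} secures it under the extra condition $\mathcal{X}-projective \subseteq \mathcal{X}$, so if one wants to avoid that hypothesis the cleaner route is orthogonality-theoretic: $\mathcal{X}-projective = {}^{\bot}\mathcal{X}$ is closed under extensions, and every entry of a complex in $C(\mathcal{X}^{*})$ lies in $\mathcal{X} \subseteq ({}^{\bot}\mathcal{X})^{\bot}$, so an exact complex with $\mathcal{X}$-projective kernels should be left $Ext^{1}$-orthogonal to every complex in $C(\mathcal{X}^{*})$. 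Establishing this last orthogonality at the level of complexes, rather than degreewise for modules, is the one point that genuinely requires care, and is where I would concentrate the detailed argument.
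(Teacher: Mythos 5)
Your reading of the corollary as a direct consequence of Lemma \ref{1.24} matches the paper's intent (the corollary is stated there with no separate proof), but both halves of your plan have problems, and the second is fatal. On the first step: completeness of $(\mathcal{X},\mathcal{X}^{\bot})$ furnishes special $\mathcal{X}$-precovers, i.e.\ epimorphisms from modules \emph{in} $\mathcal{X}$ with kernel in $\mathcal{X}^{\bot}$. Lemma \ref{1.24} asks for something different: an epimorphism from an $\mathcal{X}$-\emph{projective} module (a module in ${}^{\bot}\mathcal{X}$) with kernel \emph{in} $\mathcal{X}$. Both the covering object and the kernel are of the wrong type, and neither statement implies the other in general; the honest reading is that the corollary silently retains the standing hypotheses of Lemma \ref{1.24} and adds completeness only to activate that lemma's two parenthetical clauses.

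The decisive gap is your upgrade step, which needs the inclusion $\varepsilon_{\mathcal{X}-projective}\subseteq C(\mathcal{X}-projective)$, and the "cleaner route" you propose for it is false. Take $R=\mathbb{Z}/4\mathbb{Z}$ and $\mathcal{X}$ the class of injective $R$-modules; since $R$ is quasi-Frobenius these are exactly the projective modules, $(\mathcal{X},\mathcal{X}^{\bot})$ is then the complete cotorsion pair (projectives, all modules), and every module is $\mathcal{X}$-projective, so $\varepsilon_{\mathcal{X}-projective}$ is the class of \emph{all} exact complexes. The complex $E:\cdots \xrightarrow{2} R \xrightarrow{2} R \xrightarrow{2} \cdots$ is exact, and $E[-1]\in C(\mathcal{X}^{*})$. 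A null-homotopy of $id_{E[-1]}$ would give $1=2s+s'2\in 2R$, which is impossible, so by Lemma \ref{1.12} the mapping-cone sequence $0\rightarrow E[-1]\rightarrow M(id)\rightarrow E\rightarrow 0$ does not split; hence $Ext^{1}(E,E[-1])\neq 0$ and $E\notin C(\mathcal{X}-projective)$. So under the corollary's hypotheses alone your bridging inclusion fails (it genuinely needs Example \ref{1.4}'s extra assumption $\mathcal{X}-projective\subseteq \mathcal{X}$, which forces complexes in $\varepsilon_{\mathcal{X}-projective}$ to split into disks); note the counterexample refutes your method, not the corollary, whose conclusion still holds there. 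The argument the paper has set up avoids this entirely and is what the kernel clause of Lemma \ref{1.24} is for: under completeness that lemma produces an epimorphism $D\rightarrow Y$ with $D\in \varepsilon_{\mathcal{X}-projective}$ and $Ker(D\rightarrow Y)\in$ DG-$\mathcal{X}-projective$-injective $=(\varepsilon_{\mathcal{X}-projective})^{\bot}$, so for any $Q\in \varepsilon_{\mathcal{X}-projective}$ and any $g:Q\rightarrow Y$ the obstruction to lifting $g$ lies in $Ext^{1}(Q,Ker(D\rightarrow Y))=0$. Thus $D\rightarrow Y$ is a \emph{special} $\varepsilon_{\mathcal{X}-projective}$-precover, hence a precover, with no comparison between $\varepsilon_{\mathcal{X}-projective}$ and $C(\mathcal{X}-projective)$ needed; this Wakamatsu-type lifting is the step your proposal is missing.
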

\begin{lem} \label{1.25} If $\mathcal{X}$ be extension closed and every R-module has a monic
$\mathcal{X}$-injective preenvelope with cokernel in $\mathcal{X}$, then every bounded complex in C($\mathcal{X}^{*}$)  has a monic exact C($\mathcal{X}$-injective)-preenvelope (which is also in $\varepsilon_{\mathcal{X}-injective}$ if $(\mathcal{X},\mathcal{X^{\bot}})$ is a complete cotorsion pair) with cokernel in C($\mathcal{X}^{*}$) (which is also in DG-$\mathcal{X}-injective$-projective=$^{\bot}(\varepsilon_{\mathcal{X}-injective})$). Moreover if $\mathcal{X}$ is a class of finitely generated modules, then every bounded complex in C($\mathcal{X}^{*}$)  has a monic exact C($\mathcal{X}-f.g.injective$)-preenvelope. Thus every bounded $\mathcal{X}-(f.g.)injective$ complex in C($\mathcal{X}^{*}$) is exact. 
\end{lem}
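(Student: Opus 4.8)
The plan is to establish this statement as the formal dual of Lemma~\ref{1.24}: every arrow is reversed, $\mathcal{X}$-projective precovers are replaced by $\mathcal{X}$-injective preenvelopes, kernels by cokernels and pushouts by pullbacks, and the same inductive construction carries over. Concretely, I would induct on the length $n$ of a bounded complex $Y(n):\,0\to Y^{0}\to\cdots\to Y^{n}\to 0$ in $C(\mathcal{X}^{*})$. For the base case $n=0$ the hypothesis supplies a monic $\mathcal{X}$-injective preenvelope $Y^{0}\hookrightarrow I^{0}$ whose cokernel lies in $\mathcal{X}$; I then form the exact complex $E(0):\cdots\to 0\to I^{0}\xrightarrow{\mathrm{id}}I^{0}\to 0\to\cdots$, with the two copies of $I^{0}$ placed in degrees $0$ and $1$, together with the chain map $Y(0)\to E(0)$ which is the preenvelope in degree $0$. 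Since $\mathcal{X}$ is extension closed (hence closed under finite direct sums), both $I^{0}$ and the cokernel of $Y^{0}\to I^{0}$ lie in $\mathcal{X}$, so the cokernel complex of $Y(0)\to E(0)$ belongs to $C(\mathcal{X}^{*})$, while every term of $E(0)$ is $\mathcal{X}$-injective.

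For the inductive step I would assume an exact $C(\mathcal{X}$-injective)-preenvelope $Y(n)\to E(n)$ whose cokernel lies in $C(\mathcal{X}^{*})$, choose a monic $\mathcal{X}$-injective preenvelope $Y^{n+1}\hookrightarrow I^{n+1}$ with cokernel in $\mathcal{X}$, and splice these data by dualizing verbatim the formulas for $\lambda^{n}$, $\lambda_{1}^{n+1}$ and the comparison maps $s^{1},s^{2}$ used in the proof of Lemma~\ref{1.24}. The differentials of $E(n+1)$ are defined by the dual expressions, and its exactness follows, as there, from the identities $\mathrm{Im}=\mathrm{Ker}$ at each spot together with the exactness of $E(n)$; the preenvelope property is inherited because both $E(n)$ and the elementary two-term complex $\overline{I^{n+1}}$ on $I^{n+1}$ are $C(\mathcal{X}$-injective)-preenvelopes (the latter of $Y^{n+1}$ viewed in a single degree). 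This gluing is the step I expect to be the main obstacle: one must verify simultaneously that $E(n+1)$ stays exact, that each new term remains $\mathcal{X}$-injective (a finite product of $\mathcal{X}$-injectives), and that the cokernel of $Y(n+1)\to E(n+1)$ remains in $C(\mathcal{X}^{*})$, which is exactly where extension-closedness of $\mathcal{X}$ is used.

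The parenthetical refinements then follow formally. If $(\mathcal{X},\mathcal{X}^{\bot})$ is a complete cotorsion pair the preenvelopes may be taken special, so every term of the resulting $E$ lies in $\mathcal{X}^{\bot}=\mathcal{X}$-injective and the kernels are $\mathcal{X}$-injective; hence $E\in\varepsilon_{\mathcal{X}-injective}$. Applying Theorem~\ref{t} with the extension-closed class $\mathcal{Y}=\mathcal{X}$-injective in place of $\mathcal{X}$ gives ${}^{\bot}\varepsilon_{\mathcal{Y}}=\,$DG-$\mathcal{Y}$-projective, which identifies the cokernel $K\in C(\mathcal{X}^{*})$ with an object of DG-$\mathcal{X}$-injective-projective $={}^{\bot}(\varepsilon_{\mathcal{X}-injective})$. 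When $\mathcal{X}$ is a class of finitely generated modules the identical construction, now using f.g.\ $\mathcal{X}$-injective preenvelopes at each stage, produces a monic exact $C(\mathcal{X}$-f.g.injective)-preenvelope.

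Finally, for the closing assertion let $C$ be a bounded $\mathcal{X}$-(f.g.)injective complex in $C(\mathcal{X}^{*})$ and apply the construction to $C$ itself, obtaining a monic exact preenvelope $0\to C\to E\to K\to 0$ with $K\in C(\mathcal{X}^{*})$, where $K$ is moreover a finitely generated complex in the f.g.\ case. Since $C$ is $\mathcal{X}$-(f.g.)injective and $K$ is a (finitely generated) complex in $C(\mathcal{X}^{*})$, Definition~\ref{1.3} yields $Ext^{1}(K,C)=0$; hence the sequence splits, $C$ is a direct summand of the exact complex $E$, and therefore $C$ is itself exact.
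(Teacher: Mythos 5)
Your proposal takes essentially the same route as the paper: its proof of Lemma \ref{1.25} is exactly this induction dual to Lemma \ref{1.24}, with the splicing formulas $\lambda_{n+1}^{1}(x)=(x,-s_{n+1}(x))$ and $\lambda_{n}(x,y)=s_{n}(x)+\lambda_{n}^{1}(y)$ written out, the cokernel kept in $C(\mathcal{X}^{*})$ via extension--closedness, and the final exactness claim (left implicit in the paper) following by the splitting/retract argument you give, as in Lemma \ref{1.27}. One slip should be fixed: with your cochain convention ($Y^{0}$ in degree $0$, differentials raising degree), the two copies of $I^{0}$ in $E(0)$ must be placed in degrees $-1$ and $0$, not $0$ and $1$; with your placement the commutativity requirement $d_{E}^{0}f^{0}=f^{1}d_{Y}^{0}=0$ forces $f^{0}=0$, so the preenvelope map would not be a chain map (the paper sidesteps this by indexing homologically, $Y_{n}\rightarrow\cdots\rightarrow Y_{0}$, so that its disc correctly sits in degrees $1$ and $0$), and the same shift propagates through the inductive step, where $E(n)$ occupies degrees $-1$ through $n$.
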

\begin{proof} Let $Y(n): ... \rightarrow 0 \rightarrow Y_{n} \rightarrow Y_{n-1} \rightarrow ... \rightarrow Y_{0} \rightarrow
0 \rightarrow...$.
We use induction on n. Let $n=0$, then we have the following
commutative diagram;
\[\begin{diagram}
\node{Y(0):...0} \arrow{e} \arrow{s} \node{0}
\arrow{e} \arrow{s} \node{Y_{0}} \arrow{e} \arrow{s} \node{0...} \\
\node{E(0):...0}  \arrow{e} \node{E_{0}}
\arrow{e,t} {id} \node{E_{0}} \arrow{e} \node{0...}  
\end{diagram}\]
\noindent where $0 \rightarrow Y_{0} \rightarrow E_{0}$ is a monic preenvelope in $\mathcal{X}$ with cokernel in $\mathcal{X}$ and thus $ E(0)$ is an exact preenvelope of $Y(0)$ with cokernel in $C(\mathcal{X}^{*})$.
We assume the following diagram which is commutative;
\[\begin{diagram}
\node{Y(n):...0} \arrow{e}  \node{0}  \arrow{e} \arrow{s}
\node{Y_{n}} \arrow{e,t} {a_{n}} \arrow{s,r} {(f_{n},0)} \node{...}
\arrow{e} \node{Y_{0}}
\arrow{e} \arrow{s,r} {f_{0}} \node{0}\\
\node{E(n):...0} \arrow{e} \node{E_{n}} \arrow{e,t} {\lambda_{n}^{1}}
\node{E_{n} \oplus E_{n-1}} \arrow{e,t} {\lambda_{n-1}} \node{...}
\arrow{e}
\node{E_{0}} \arrow{e} \node{0} 
\end{diagram}\]
\noindent where the $0\rightarrow Y_{i}\rightarrow E_{i}$ are $\mathcal{X}$-injective preenvelopes in $\mathcal{X}$  with cokernel in $\mathcal{X}$ for $1\leq i\leq n$, $E(n)$ is exact and cokernel $(Y(n)\rightarrow E(n))\in C(\mathcal{X}^{*})$.
Since $0\rightarrow Y_{n}\rightarrow E_{n}$ and $0\rightarrow \underline{Y_{n+1}}\rightarrow \overline{E_{n+1}} $ are C($\mathcal{X}$-injective)-preenvelopes, we have the following commutative diagram:
\[\begin{diagram}
\node{\underline{Y_{n+1}}} \arrow{e} \arrow{s} \node{Y(n)} \arrow{s} \\
\node{\overline{E_{n+1}}} \arrow{e} \node{E(n)}
\end{diagram}\]
\noindent Then we have the diagram as follow:
\[\begin{diagram}
\node{\overline{E_{n+1}}:...0} \arrow{e}  \node{E_{n+1}}  \arrow{e,t} {1}\arrow{s,r}{s_{n+1}}
\node{E_{n+1}} \arrow{e} \arrow{s,r} {s_{n}} \node{0} \arrow{e}\arrow{s}\node{...}\\
\node{E(n):...0} \arrow{e} \node{E_{n}} \arrow{e,t} {\lambda_{n}^{1}}
\node{E_{n} \oplus E_{n-1}} \arrow{e,t} {\lambda_{n-1}} \node{...}
\arrow{e}\node{...}  
\end{diagram}\]
\noindent where $s_{n}=\lambda_{n}^{1}s_{n+1}$ and $\lambda_{n-1}s_{n}=0$.
Moreover we see that $(f_{n},0){a_{n+1}}={s_{n}}{f_{n+1}}$ and ${\lambda_{n}^{1}}{s_{n+1}}={s_{n}}$ by the following diagrams:
\[\begin{diagram}
\node{Y_{n+1}} \arrow{e,t} {a_{n+1}} \arrow{s,r}
{f_{n+1}} \node{Y_{n}} \arrow{s,r} {(f_{n},0)}\\
\node{E_{n+1}} \arrow{e,t} {s_{n}} \node{E_{n} \oplus E_{n-1}} 
\end{diagram}\]
\[\begin{diagram}
\node{E_{n+1}} \arrow{e,t} {s_{n+1}} \arrow{s,r} {1}
\node{E_{n}} \arrow{s,r} {\lambda_{n}^{1}}\\
\node{E_{n+1}} \arrow{e,t} {s_{n}} \node{E_{n} \oplus E_{n-1}} 
\end{diagram}\]
Let $\lambda_{n+1}^{1}(x)=(x,-s_{n+1}(x))$, \;
$\lambda_{n}(x,y)=s_{n}(x)+\lambda^{1}_{n}(y)$. Then we have the following commutative diagram:
\[\begin{diagram}
\node{Y(n+1):...0} \arrow{e} \node{0} \arrow{e} \arrow{s}
\node{Y_{n+1}} \arrow{e,t} {a_{n+1}} \arrow{s,r} {(f_{n+1},0)}
\node{Y_{n}...} \arrow{e}  \arrow{s,r} {(f_{n},0)}
\node{Y_{0}...} 
\arrow{s,r} {f^{0}} \\
\node{E(n+1):...0} \arrow{e} \node{E_{n+1}} \arrow{e,t}
{\lambda_{n+1}^{1}} \node{E_{n+1} \oplus E_{n}} \arrow{e,t}
{\lambda_{n}} \node{E_{n} \oplus E_{n-1}...} \arrow{e} \node{E_{0}...} 
\end{diagram}\]
\noindent where $E(n+1)$ is exact and cokernel in $C(\mathcal{X}^{*})$.
Therefore, $Y(n)$ has a C($\mathcal{X}-(f.g.)injective$)-preenvelope.
\end{proof} 
\begin{cor}\label{cor}Let $(\mathcal{X},\mathcal{X^{\bot}})$ be a complete cotorsion pair. Then every bounded complex in C($\mathcal{X}^{*}$)  has an $\varepsilon_{\mathcal{X}-injective}$-preenvelope.
\end{cor}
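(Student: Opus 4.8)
The plan is to obtain the corollary as an immediate consequence of Lemma \ref{1.25}, so my first task is to check that the completeness of $(\mathcal{X},\mathcal{X}^{\bot})$ supplies the two hypotheses that lemma requires. Since an $R$-module is $\mathcal{X}$-injective precisely when it belongs to $\mathcal{X}^{\bot}$, I would first note that $\mathcal{X}={}^{\bot}(\mathcal{X}^{\bot})$ is automatically closed under extensions, and that the ``enough injectives'' part of completeness gives, for every module $M$, a short exact sequence $0\to M\to W\to X\to 0$ with $W\in\mathcal{X}^{\bot}$ and $X\in\mathcal{X}$. This is exactly a monic $\mathcal{X}$-injective preenvelope of $M$ with cokernel in $\mathcal{X}$, so both hypotheses of Lemma \ref{1.25} hold.

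Next I would apply Lemma \ref{1.25} to a bounded complex $Y\in C(\mathcal{X}^{*})$: it produces a monic exact C($\mathcal{X}$-injective)-preenvelope $\varphi\colon Y\to E$, and, because we are in a complete cotorsion pair, its parenthetical clause guarantees $E\in\varepsilon_{\mathcal{X}-injective}$. Hence $E$ is already an object of the desired class, and the only thing left is to promote ``preenvelope with respect to C($\mathcal{X}$-injective)'' to ``preenvelope with respect to $\varepsilon_{\mathcal{X}-injective}$''.

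For this I would use the inclusion $\varepsilon_{\mathcal{X}-injective}\subseteq$ C($\mathcal{X}$-injective) from Example \ref{1.4}, where an arbitrary exact complex with $\mathcal{X}$-injective kernels is written as a direct limit of inverse limits of bounded such complexes; the remark that a direct (inverse) limit of exact complexes is again exact is what keeps those limits inside $\varepsilon_{\mathcal{X}-injective}$, while the extension- and limit-closure of C($\mathcal{X}$-injective) places the limits in that class. Granting the inclusion, I take any $E'\in\varepsilon_{\mathcal{X}-injective}$ and any chain map $g\colon Y\to E'$; since $E'\in$ C($\mathcal{X}$-injective) and $\varphi$ is a C($\mathcal{X}$-injective)-preenvelope, $g$ factors through $\varphi$, and as $E\in\varepsilon_{\mathcal{X}-injective}$ this exhibits $\varphi$ as the required $\varepsilon_{\mathcal{X}-injective}$-preenvelope.

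The diagrammatic bookkeeping and the factorization argument are routine; the step that deserves care, and which I expect to be the main obstacle, is justifying $\varepsilon_{\mathcal{X}-injective}\subseteq$ C($\mathcal{X}$-injective) for the possibly unbounded test complexes $E'$, since the preenvelope property must be tested against all of them and the unbounded case of Example \ref{1.4} leans on the exactness-of-limits remark (and, in full generality, on a finitely-presented-type hypothesis on $\mathcal{X}$). This limit computation is precisely what is meant to close that gap, and it is where I would concentrate the effort.
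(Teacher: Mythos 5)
Your first two steps are sound: completeness of $(\mathcal{X},\mathcal{X}^{\bot})$ does give that $\mathcal{X}={}^{\bot}(\mathcal{X}^{\bot})$ is extension closed and that every module $M$ embeds in a short exact sequence $0\to M\to W\to X\to 0$ with $W\in\mathcal{X}^{\bot}=\mathcal{X}$-injective and $X\in\mathcal{X}$, i.e.\ a monic special $\mathcal{X}$-injective preenvelope with cokernel in $\mathcal{X}$; so Lemma \ref{1.25} applies and yields a monic exact C($\mathcal{X}$-injective)-preenvelope $\varphi\colon Y\to E$ with $E\in\varepsilon_{\mathcal{X}-injective}$. The gap is exactly where you suspected it, and it is fatal to your route: the inclusion $\varepsilon_{\mathcal{X}-injective}\subseteq$ C($\mathcal{X}$-injective) is not available under the hypotheses of Corollary \ref{cor}. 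Example \ref{1.4} establishes it only when $\mathcal{X}$-injective $\subseteq\mathcal{X}$ (via splitting into disk complexes), and otherwise only the f.g.\ variant $\varepsilon_{\mathcal{X}-injective}\subseteq C(\mathcal{X}\mbox{-f.g.injective})$ when $\mathcal{X}$ consists of finitely presented modules; neither hypothesis is assumed here. Moreover the inclusion is genuinely false for a general complete cotorsion pair: take $\mathcal{X}$ to be the class of projective modules over $R=k[x]/(x^{2})$, so that $\mathcal{X}$-injective is the class of all modules and $\varepsilon_{\mathcal{X}-injective}=\varepsilon$. The complex $P:\cdots\xrightarrow{x}R\xrightarrow{x}R\xrightarrow{x}\cdots$ lies in $C(\mathcal{X}^{*})$ but is exact with non-projective cycles, hence is not DG-projective; since ${}^{\bot}\varepsilon$ is the class of DG-projective complexes, there is an exact complex $E'$ with $Ext^{1}(P,E')\neq 0$, i.e.\ $E'\in\varepsilon_{\mathcal{X}-injective}$ but $E'\notin$ C($\mathcal{X}$-injective). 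For such an $E'$ your factorization of $g\colon Y\to E'$ through $\varphi$ simply cannot be extracted from the C($\mathcal{X}$-injective)-preenvelope property.

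The repair — and the derivation the paper intends — uses the second parenthetical clause of Lemma \ref{1.25}, which your proposal drops: the cokernel $C$ of $\varphi$ lies not just in $C(\mathcal{X}^{*})$ but in ${}^{\bot}(\varepsilon_{\mathcal{X}-injective})$. Then for any $E'\in\varepsilon_{\mathcal{X}-injective}$ and any $g\colon Y\to E'$, applying $Hom(-,E')$ to $0\to Y\xrightarrow{\varphi}E\to C\to 0$ gives the exact sequence $Hom(E,E')\to Hom(Y,E')\to Ext^{1}(C,E')=0$, so $g$ factors through $\varphi$. Thus $\varphi$ is a special, hence genuine, $\varepsilon_{\mathcal{X}-injective}$-preenvelope, and no comparison between the classes $\varepsilon_{\mathcal{X}-injective}$ and C($\mathcal{X}$-injective) is needed at all.
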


\begin{thm}\label{tahire} Let $\mathcal{X}$ be closed under extensions. Let every R-module have a monic(epic)$\mathcal{X}$-injective(projective) preenvelope(precover) with cokernel (kernel)in $\mathcal{X}$. Then every
left (right) bounded complex in C($\mathcal{X}^{*}$) has a monic (epic) exact C($\mathcal{X}$-injective(projective)) preenvelope(precover).\\ 
 Moreover if $(\mathcal{X},\mathcal{X^{\bot}})$ is a complete cotorsion pair, then every
left (right) bounded complex in C($\mathcal{X}^{*}$) has a monic (epic)$\varepsilon_{\mathcal{X}-injective(projective)}$-pre-\\
envelope (precover).
 
\end{thm}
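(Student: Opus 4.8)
The plan is to bootstrap from the bounded case, which is already settled in Lemma \ref{1.25} (for the injective/preenvelope side) and Lemma \ref{1.24} (for the projective/precover side), by a limit argument, and then to read off the universal property formally from an $Ext^1$-vanishing. By the evident duality between the two assertions it suffices to treat the left bounded, $\mathcal{X}$-injective, preenvelope statement; the right bounded, $\mathcal{X}$-projective, precover statement is obtained by dualizing every arrow and replacing Lemma \ref{1.25} by Lemma \ref{1.24}. So let $Y \in C(\mathcal{X}^{*})$ be left bounded, say $0 \to Y^{0} \to Y^{1} \to \cdots$.

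First I would write $Y$ as the inverse limit $Y = \varprojlim_{n} Y(n)$ of its bounded brutal truncations $Y(n): 0 \to Y^{0} \to \cdots \to Y^{n} \to 0$, which all lie in $C(\mathcal{X}^{*})$; the transition maps $Y(n+1) \to Y(n)$ are the surjections chopping the top term, and in each fixed degree they are eventually the identity, so the system stabilizes degreewise. Applying Lemma \ref{1.25} to each $Y(n)$ yields a monic exact $C(\mathcal{X}\text{-injective})$-preenvelope $Y(n) \to E(n)$ with cokernel in $C(\mathcal{X}^{*})$. The point I must check here is that the inductive construction carried out in the proof of Lemma \ref{1.25} can be performed compatibly, so that the $E(n)$ assemble into an inverse system $\{E(n)\}$ over $\{Y(n)\}$ whose transition maps $E(n+1) \to E(n)$ are again degreewise eventually the identity; this is precisely what the step ``extend $E(n)$ to $E(n+1)$'' in that proof provides.

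I then set $E := \varprojlim_{n} E(n)$ with the induced map $\eta : Y \to E$. Because both systems stabilize in each fixed degree, the inverse limit is computed degreewise as its stable value: each $E^{i}$ equals a finite direct sum of $\mathcal{X}$-injective modules and is therefore again $\mathcal{X}$-injective, so $E \in C(\mathcal{X}\text{-injective})$; moreover $E$ is exact, $\eta$ is monic, and the cokernel $E/Y$ lies in $C(\mathcal{X}^{*})$, all of these being inherited termwise from the $E(n)$ in the stable range. This degreewise stabilization is exactly what lets me avoid assuming that $\mathcal{X}$ is closed under infinite inverse limits and what removes any $\varprojlim^{1}$ obstruction. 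Granting this, the preenvelope property becomes formal: for any $E' \in C(\mathcal{X}\text{-injective})$ the short exact sequence $0 \to Y \xrightarrow{\eta} E \to E/Y \to 0$ with $E/Y \in C(\mathcal{X}^{*})$ gives $Ext^{1}(E/Y, E') = 0$ by Definition \ref{1.3}, whence $Hom(E, E') \to Hom(Y, E') \to 0$ is exact and every map $Y \to E'$ factors through $\eta$. Thus $\eta$ is a monic exact $C(\mathcal{X}\text{-injective})$-preenvelope.

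For the last assertion, if $(\mathcal{X}, \mathcal{X}^{\bot})$ is a complete cotorsion pair then Lemma \ref{1.25} already delivers each $E(n)$ in $\varepsilon_{\mathcal{X}\text{-injective}}$, that is, exact with $\mathcal{X}$-injective kernels; by the same degreewise stabilization the limit $E$ is again exact with $\mathcal{X}$-injective kernels, so $E \in \varepsilon_{\mathcal{X}\text{-injective}}$, and the preenvelope property follows exactly as above from the same $Ext^{1}$-vanishing (now using $\varepsilon_{\mathcal{X}\text{-injective}} \subseteq C(\mathcal{X}\text{-injective})$, cf.\ Example \ref{1.4}), giving a monic $\varepsilon_{\mathcal{X}\text{-injective}}$-preenvelope. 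I expect the main obstacle to be the step of organizing the Lemma \ref{1.25} preenvelopes into a genuine inverse system whose transition maps are degreewise eventually the identity; once that compatibility is in place, stabilization handles exactness, the monomorphism, and the membership conditions on the terms and kernels, and the universal property drops out of the $Ext^{1}$-vanishing with no further work.
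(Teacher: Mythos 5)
Your overall architecture coincides with the paper's: truncate $Y$ into the bounded complexes $Y(n)$, apply Lemma \ref{1.25} to each, pass to the inverse limit $E=\varprojlim E(n)$, and extract the preenvelope property from the vanishing of $Ext^{1}(E/Y,E')$ for $E/Y\in C(\mathcal{X}^{*})$. (Where the paper cites Theorem 1.5.13 of \cite{4} to control the limit, you invoke degreewise stabilization of the nested construction; that substitution is legitimate, and indeed the paper implicitly needs the same compatibility of the $E(n)$ in order to know the cokernel $\varprojlim E(n)/Y(n)$ stays in $C(\mathcal{X}^{*})$.) However, there is one genuine gap in your argument, at the point where you conclude $E\in C(\mathcal{X}\text{-injective})$: you infer this from the fact that each term $E^{i}$ is a finite direct sum of $\mathcal{X}$-injective modules, hence an $\mathcal{X}$-injective module. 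That inference is invalid. By Definition \ref{1.3}, membership in $C(\mathcal{X}\text{-injective})$ means $Ext^{1}(Y/X,E)=0$ for every complex $Y/X\in C(\mathcal{X}^{*})$, which is a condition on the complex, not on its terms; the paper's Example \ref{1.10} exhibits exactly a complex all of whose terms are $\mathcal{X}$-injective modules which is \emph{not} an $\mathcal{X}$-injective complex. Since both the statement of the theorem and your factorization step $Hom(E,E')\to Hom(Y,E')\to Ext^{1}(E/Y,E')=0$ require $E$ itself to lie in $C(\mathcal{X}\text{-injective})$, this step cannot be passed over.

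The gap is repairable, because your stabilization gives more than termwise injectivity: the limit $E$ is a \emph{left bounded exact} complex whose kernels are the $\mathcal{X}$-injective modules $E_{i}$; equivalently, $E$ is degreewise the product of the two-term exact complexes $0\to E_{i}\xrightarrow{id} E_{i}\to 0$, each of which is an $\mathcal{X}$-injective complex, and only finitely many of them contribute in any fixed degree. Either description yields $E\in C(\mathcal{X}\text{-injective})$ by Example \ref{1.4} (every left bounded exact complex with $\mathcal{X}$-injective kernels lies in $C(\mathcal{X}\text{-injective})$; products of $\mathcal{X}$-injective complexes are $\mathcal{X}$-injective). This is in effect what the paper is asserting when it writes $Ext^{1}(A/B,\varprojlim E(n))=0$ for $A/B\in C(\mathcal{X}^{*})$. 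With that one inference replaced, your proof goes through, including the ``moreover'' clause, where the same structure shows $E\in\varepsilon_{\mathcal{X}\text{-injective}}$.
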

\begin{proof}Let $Y: ...\rightarrow 0 \rightarrow Y^{0} \rightarrow Y^{1}\rightarrow...$ and $E(n)$ be a C($\mathcal{X}$-injective) preenvelope of
$Y(n): ... \rightarrow 0 \rightarrow Y^{0} \rightarrow ...
\rightarrow Y^{n}\rightarrow 0 \rightarrow ...$.
Then $\underleftarrow{lim}Y(n)=Y$.
By Lemma \ref{1.25}, $Y(n)$ has a
C($\mathcal{X}$-injective) preenvelope $E(n)$ such that  $0 \rightarrow Y(n) \rightarrow E(n)$ is exact. Then
by Theorem 1.5.13 in \cite{4} $0 \rightarrow \underleftarrow{lim}Y(n)
\rightarrow \underleftarrow{lim}E(n)$ is exact with cokernel
$\underleftarrow{lim}\frac{E(n)}{Y(n)}\in C(\mathcal{X^{*}})$.
Since $Ext^{1}(\frac{A}{B},\underleftarrow{lim}E(n))=0$ where $\frac{A}{B}\in C(\mathcal{X^{*}})$, $\underleftarrow{lim}E(n)$ is an exact C($\mathcal{X}$-injective)preenvelope of Y. The other parts are also proved similarly.
\end{proof} 
\begin{cor}\label{gunes} i)If $\mathcal{X}$ is closed under direct sums and extensions and a special precovering and preenveloping class, then every left (right) bounded complex has a monic (epic) exact C($\mathcal{X}$-injective(projective)) preenvelope (precover) where $\mathcal{X}-inj$ and $\mathcal{X}-proj\subseteq \mathcal{X}$.\\
ii)If  $(\mathcal{X},\mathcal{X^{\bot}})$ is a complete cotorsion pair and C($\mathcal{X}^{*}$) is closed under inverse limit, then every  complex has a monic (epic) exact C($\mathcal{X}$-injective(projective)) preenvelope (which is in $\varepsilon_{\mathcal{X}-injective(projective)}$) whe-\\
re $\mathcal{X}-inj$ and $\mathcal{X}-proj\subseteq \mathcal{X}$.
\end{cor}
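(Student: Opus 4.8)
The plan is to obtain both parts from Theorem~\ref{tahire}, first producing its two module-level hypotheses and then removing the restriction ``in $C(\mathcal{X}^{*})$''. For the preenvelope/injective direction Theorem~\ref{tahire} needs every module to have a monic $\mathcal{X}$-injective preenvelope with cokernel in $\mathcal{X}$, i.e.\ that the cotorsion pair $(\mathcal{X},\mathcal{X}^{\bot})$ have enough injectives; for the precover/projective direction it needs an epic $\mathcal{X}$-projective precover with kernel in $\mathcal{X}$, i.e.\ that $({}^{\bot}\mathcal{X},\mathcal{X})$ have enough projectives. In (ii) completeness of $(\mathcal{X},\mathcal{X}^{\bot})$ is assumed outright. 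In (i) I would invoke Salce's lemma: a special precovering $\mathcal{X}$ makes $(\mathcal{X},\mathcal{X}^{\bot})$ have enough projectives, hence enough injectives, while a special preenveloping $\mathcal{X}$ makes $({}^{\bot}\mathcal{X},\mathcal{X})$ have enough injectives, hence enough projectives. Either way the two short exact sequences required by Theorem~\ref{tahire} are available, and since $\mathcal{X}$ is closed under extensions its remaining hypothesis holds; thus every left (right) bounded complex in $C(\mathcal{X}^{*})$ receives the asserted preenvelope (precover), which in the complete case is moreover an $\varepsilon_{\mathcal{X}\text{-injective}(\text{projective})}$-preenvelope (precover) by the ``moreover'' clause of Theorem~\ref{tahire}.

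It remains to pass from complexes in $C(\mathcal{X}^{*})$ to arbitrary ones, and this is where the containments $\mathcal{X}\text{-inj},\ \mathcal{X}\text{-proj}\subseteq\mathcal{X}$ enter. By Lemma~\ref{1.9} every $\mathcal{X}$-injective complex has $\mathcal{X}$-injective terms, so $\mathcal{X}\text{-inj}\subseteq\mathcal{X}$ forces $C(\mathcal{X}\text{-injective})\subseteq C(\mathcal{X}^{*})$, and dually on the projective side. For (i) I would take an arbitrary left bounded complex $Y$, construct a monic, left bounded $C(\mathcal{X}^{*})$-preenvelope $Y\to W$ degreewise along the bounded-below filtration—spending here the closure of $\mathcal{X}$ under direct sums and its enveloping property—and then apply Theorem~\ref{tahire} to $W\in C(\mathcal{X}^{*})$ to get a monic exact $C(\mathcal{X}\text{-injective})$-preenvelope $W\to E$. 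Because the target class $C(\mathcal{X}\text{-injective})$ is contained in the intermediate class $C(\mathcal{X}^{*})$, any map from $Y$ into some $I\in C(\mathcal{X}\text{-injective})$ factors first through $W$ and then through $E$; hence the composite $Y\to W\to E$ is a monic exact $C(\mathcal{X}\text{-injective})$-preenvelope of $Y$. The right bounded, precover statement is dual.

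For (ii) the globalization is instead by an inverse limit. Writing a complex as $Y=\varprojlim Y(n)$ with $Y(n)$ its left bounded brutal truncations, I would apply the left bounded case of Theorem~\ref{tahire} to the $Y(n)$ to obtain a compatible tower of monic exact $\varepsilon_{\mathcal{X}\text{-injective}}$-preenvelopes $Y(n)\to E(n)$ and then form $\varprojlim E(n)$. By Theorem~1.5.13 of \cite{4} the sequence $0\to\varprojlim Y(n)\to\varprojlim E(n)\to\varprojlim E(n)/Y(n)\to 0$ is exact, and the hypothesis that $C(\mathcal{X}^{*})$ is closed under inverse limits keeps its cokernel in $C(\mathcal{X}^{*})$; the vanishing $\operatorname{Ext}^{1}(A/B,\varprojlim E(n))=0$ for all $A/B\in C(\mathcal{X}^{*})$ then identifies $\varprojlim E(n)$ as the desired $\varepsilon_{\mathcal{X}\text{-injective}}$-preenvelope, exactly as at the end of the proof of Theorem~\ref{tahire}.

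I expect the genuine difficulty to lie in these two globalization steps rather than in the cotorsion-pair bookkeeping. In (i) one must actually produce a $C(\mathcal{X}^{*})$-preenvelope of a \emph{complex} (a chain-level statement, not merely a degreewise one) while preserving monomorphy and left boundedness. In (ii) the delicate point is that passing to $\varprojlim$ preserves both exactness and the preenvelope property, which amounts to controlling the derived inverse limit $\varprojlim^{1}$ and keeping the limit $\mathcal{X}$-injective as a complex—precisely the role of the assumption that $C(\mathcal{X}^{*})$ be closed under inverse limits.
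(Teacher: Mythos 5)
Your part (i) is essentially the paper's own argument. The paper likewise first passes from an arbitrary left (right) bounded complex to a monic (epic) left (right) bounded $C(\mathcal{X}^{*})$-preenvelope (precover), then applies Theorem \ref{tahire} (together with Lemmas \ref{1.24} and \ref{1.25}) to that intermediate complex, the composite being a preenvelope (precover) precisely because $C(\mathcal{X}\text{-injective})\subseteq C(\mathcal{X}^{*})$ when $\mathcal{X}\text{-inj}\subseteq \mathcal{X}$ (Lemma \ref{1.9}), exactly as you argue. Your Salce-type verification that a special precovering (preenveloping) class supplies the module-level hypotheses of Theorem \ref{tahire} -- a monic $\mathcal{X}$-injective preenvelope with cokernel in $\mathcal{X}$, resp.\ an epic $\mathcal{X}$-projective precover with kernel in $\mathcal{X}$ -- is correct and is a step the paper leaves entirely implicit; and the existence of the intermediate $C(\mathcal{X}^{*})$-preenvelope, which you rightly flag as the real difficulty, is asserted by the paper with no more justification than you give.

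Part (ii), however, has a genuine gap which the paper's ordering of steps avoids. You apply the left bounded case of Theorem \ref{tahire} directly to the brutal truncations $Y(n)$ of an arbitrary complex $Y$. But Theorem \ref{tahire} only applies to bounded-below complexes lying in $C(\mathcal{X}^{*})$, and the truncations of an arbitrary $Y$ have no reason to have terms in $\mathcal{X}$; consequently the compatible tower of preenvelopes $Y(n)\to E(n)$ whose inverse limit you want to form does not exist. In the paper's scheme the order is reversed: one first takes a monic $C(\mathcal{X}^{*})$-preenvelope $Y\to W$ of the arbitrary complex -- this is where completeness of $(\mathcal{X},\mathcal{X}^{\bot})$ and the closure of $C(\mathcal{X}^{*})$ under inverse limits are actually spent, since $W$ is unbounded -- and only then truncates $W$, whose truncations \emph{do} lie in $C(\mathcal{X}^{*})$, applies Theorem \ref{tahire} to them, passes to the limit as in Theorem \ref{ali}, and finally composes $Y\to W\to \varprojlim E(n)$ by the same containment argument as in (i). So your inverse-limit mechanics (Theorem 1.5.13 of the reference, exactness, the $\operatorname{Ext}$-vanishing at the limit) are the right tool, but they must be run inside $C(\mathcal{X}^{*})$ after a preenveloping step that your part (ii) omits; using the inverse-limit hypothesis only to keep the cokernel $\varprojlim E(n)/Y(n)$ in $C(\mathcal{X}^{*})$, as you do, is not where that hypothesis carries the load.
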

\begin{proof} Since every (left (right)bounded) complex has a monic (epic) (left(right) bounded) C($\mathcal{X}^{*}$) envelope (cover) under this conditions, by Theorem \ref{tahire}, Lemma \ref{1.24} and Lemma \ref{1.25} every (left (right)bounded) complex  has the requires, too.  
\end{proof} 
\begin{thm} \label{q} Let $\mathcal{X}$ be a class of finitely presented modules and be closed under extensions and direct limit. Let every R-module have a monic
$\mathcal{X}$-injective preenvelope with cokernel in $\mathcal{X}$. Then every
right bounded complex in C($\mathcal{X}^{*}$) has a monic exact C($\mathcal{X}$-f.g.injective) preenvelope. Moreover every right bounded $\mathcal{X}$-f.g.injective complex in C($\mathcal{X}^{*}$) is exact.
\end{thm}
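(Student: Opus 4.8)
The plan is to run the argument of Theorem~\ref{tahire} with \emph{direct} limits in place of inverse limits; this is exactly the point of assuming $Y$ right bounded and $\mathcal{X}$ closed under direct limits. Writing $Y$ with $Y^{i}=0$ for $i>0$, realize it as the union of its brutal truncations $Y(n):0\rightarrow Y^{-n}\rightarrow\cdots\rightarrow Y^{0}\rightarrow 0$, so that $Y=\varinjlim Y(n)$ along the inclusions $Y(n)\hookrightarrow Y(n+1)$. Each $Y(n)$ is a bounded complex in $C(\mathcal{X}^{*})$ with finitely presented terms, hence a finitely generated complex, so Lemma~\ref{1.25} furnishes a monic exact $C(\mathcal{X}\text{-f.g.injective})$-preenvelope $0\rightarrow Y(n)\rightarrow E(n)\rightarrow E(n)/Y(n)\rightarrow 0$ with $E(n)/Y(n)\in C(\mathcal{X}^{*})$. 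Since Lemma~\ref{1.25} is proved by an induction that extends $E(n)$ to $E(n+1)$, these fit into a direct system compatible with the inclusions, and I set $E=\varinjlim E(n)$.

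First I would record the formal properties of $E$. A direct limit of exact complexes is exact (as recalled before Corollary~\ref{core}) and direct limits of modules are exact, so $E$ is exact, the map $Y\rightarrow E$ is monic, and its cokernel is $\varinjlim E(n)/Y(n)$, which lies in $C(\mathcal{X}^{*})$ because $\mathcal{X}$ is closed under direct limits. To see that $E$ is itself $\mathcal{X}$-f.g.injective, let $W\in C(\mathcal{X}^{*})$ be any finitely generated complex; being bounded with finitely presented terms, $W$ is finitely presented as a complex, so $\mathrm{Ext}^{1}(W,-)$ commutes with the direct limit and
\[
\mathrm{Ext}^{1}(W,E)=\mathrm{Ext}^{1}(W,\varinjlim E(n))\cong\varinjlim\mathrm{Ext}^{1}(W,E(n))=0,
\]
each $\mathrm{Ext}^{1}(W,E(n))$ vanishing because $E(n)\in C(\mathcal{X}\text{-f.g.injective})$. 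Hence $E$ is an exact member of $C(\mathcal{X}\text{-f.g.injective})$.

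It remains to verify that $Y\rightarrow E$ is a $C(\mathcal{X}\text{-f.g.injective})$-preenvelope, and this is the step I expect to be the main obstacle. Denoting by $C$ the cokernel above and applying $\mathrm{Hom}(-,E')$ to $0\rightarrow Y\rightarrow E\rightarrow C\rightarrow 0$ for an arbitrary $E'\in C(\mathcal{X}\text{-f.g.injective})$, every map $Y\rightarrow E'$ factors through $E$ once $\mathrm{Ext}^{1}(C,E')=0$. The definition of $\mathcal{X}$-f.g.injectivity grants this only for finitely generated $C$, whereas $C$ is merely right bounded in $C(\mathcal{X}^{*})$; this is where finite presentation must be used once more. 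Writing $C=\varinjlim C(m)$ as the direct limit of its finitely generated truncations and feeding the associated telescope sequence into $\mathrm{Hom}(-,E')$, the groups $\mathrm{Ext}^{1}(C(m),E')$ vanish, so $\mathrm{Ext}^{1}(C,E')$ is identified with $\varprojlim^{1}\mathrm{Hom}(C(m),E')$; the genuine work is to show this $\varprojlim^{1}$ dies (a Mittag--Leffler argument), which is precisely the passage from finitely generated to right bounded objects already exploited in Example~\ref{1.4}. Granting it, $Y\rightarrow E$ is a (special) preenvelope. Finally, for the last assertion, if $I$ is a right bounded $\mathcal{X}$-f.g.injective complex in $C(\mathcal{X}^{*})$, then the first part gives a monic preenvelope $I\rightarrow E$ into an exact complex, and since $I$ is $\mathcal{X}$-f.g.injective the identity $\mathrm{id}_{I}$ factors through it; thus $I$ is a retract of $E$, and a retract of an exact complex is exact, so $I$ is exact.
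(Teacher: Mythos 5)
Your route is the paper's route almost step for step: the paper also realizes $Y$ as $\varinjlim Y(n)$ over its bounded truncations, applies Lemma~\ref{1.25} to get monic exact C($\mathcal{X}$-f.g.injective) preenvelopes $E(n)$ with cokernels in $C(\mathcal{X^{*}})$, passes to $E=\varinjlim E(n)$ (citing Theorem 1.5.6 of \cite{4} for exactness of the limit and for the cokernel $\varinjlim E(n)/Y(n)\in C(\mathcal{X^{*}})$), and uses finite presentedness of the members of $\mathcal{X}$ to get $Ext^{1}(A/B,E)=0$ for every finitely generated $A/B\in C(\mathcal{X^{*}})$; your closing retract argument for the last assertion is also exactly what the paper intends. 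Where you differ is in recognizing that membership of $E$ in the class is not the whole job: the paper simply declares $E$ to be a preenvelope, while you correctly isolate the factorization property, which needs $Ext^{1}(C,E')=0$ for the cokernel $C$ of $Y\rightarrow E$, and $C$ is only right bounded in $C(\mathcal{X^{*}})$, not finitely generated. You should not leave that step ``granted,'' because it is genuinely part of the statement and it closes in two lines: for consecutive truncations, $C(m+1)/C(m)$ is a single module of $\mathcal{X}$ placed in one degree, hence a finitely generated complex in $C(\mathcal{X^{*}})$, so $Ext^{1}(C(m+1)/C(m),E')=0$ and each restriction map $Hom(C(m+1),E')\rightarrow Hom(C(m),E')$ is surjective; the tower $\{Hom(C(m),E')\}$ is therefore Mittag--Leffler, so $\varprojlim^{1}Hom(C(m),E')=0$, i.e.\ $Ext^{1}(C,E')=0$, and $Y\rightarrow E$ is even a special preenvelope.

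One caveat in the opposite direction: your justification ``$W$ is finitely presented as a complex, so $Ext^{1}(W,-)$ commutes with the direct limit'' is too strong as stated --- for arbitrary direct systems that requires $W$ to be of type $FP_{2}$, not merely finitely presented. It is harmless here because the transition maps $E(n)\rightarrow E(n+1)$ produced by the construction in Lemma~\ref{1.25} are monomorphisms: choosing $0\rightarrow K\rightarrow P\rightarrow W\rightarrow 0$ with $P$ a finitely generated projective complex and $K$ finitely generated, any map $K\rightarrow \varinjlim E(n)$ has image inside some $E(N)$ and extends to $P$ there since $Ext^{1}(W,E(N))=0$, which yields the vanishing $Ext^{1}(W,\varinjlim E(n))=0$ you need. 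The paper's own sentence carries the same imprecision, so with these two repairs your write-up is in fact more complete than the printed proof.
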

\begin{proof} Let $Y: ... \rightarrow Y_{2} \rightarrow Y_{1} \rightarrow Y_{0} \rightarrow 0 \rightarrow
...$ and $E(n)$ be a C($\mathcal{X}$-f.g.injective) preenvelope of
$Y(n): ... \rightarrow 0 \rightarrow Y_{n} \rightarrow ...
\rightarrow Y_{1} \rightarrow Y_{0} \rightarrow 0 \rightarrow ...$.
Then $\underrightarrow{lim}Y(n)=Y$.
By Lemma \ref{1.25}, $Y(n)$ has a
C($\mathcal{X}$-f.g.injective) preenvelope $E(n)$ such that  $0 \rightarrow Y(n) \rightarrow E(n)$ is exact. Then
by Theorem 1.5.6 in \cite{4} $0 \rightarrow \underrightarrow{lim}Y(n)
\rightarrow \underrightarrow{lim}E(n)$ is exact with cokernel
$\underrightarrow{lim}\frac{E(n)}{Y(n)} \in C(\mathcal{X^{*}})$ .
Since $\mathcal{X}$ is a class of finitely presented modules, $Ext^{1}(\frac{A}{B},\underrightarrow{lim}E(n))=0$ where $\frac{A}{B}$\; is a finitely presented complex in $C(\mathcal{X^{*}})$ . So
$\underrightarrow{lim}E(n)$ is an exact C($\mathcal{X}$-f.g.injective)
preenvelope of Y.
\end{proof}
\begin{thm}\label{ali} Let $\mathcal{X}$ be a class of finitely presented modules and closed under extensions and direct limits and $C(\mathcal{X^{*}})$ be closed under inverse limits. Let every R-module have a monic
$\mathcal{X}$-injective preenvelope  with cokernel in $\mathcal{X}$. Then every
complex in C($\mathcal{X}^{*}$) has a monic exact C($\mathcal{X}$-f.g.injective) preenvelope. Then every \;$\mathcal{X}$-f.g.injective complex in C($\mathcal{X}^{*}$) is exact.
\end{thm}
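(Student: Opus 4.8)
The plan is to reduce the unbounded case to the right bounded case already settled in Theorem \ref{q}, using the hypothesis that $C(\mathcal{X}^{*})$ is closed under inverse limits. First I would write an arbitrary complex $Y:\ldots\rightarrow Y_{n+1}\rightarrow Y_{n}\rightarrow Y_{n-1}\rightarrow\ldots$ in $C(\mathcal{X}^{*})$ as an inverse limit of right bounded complexes. For each $n$ let $Y(n)$ be the brutal truncation that keeps the terms in degrees $\geq -n$ and replaces the lower terms by $0$; each $Y(n)$ is a right bounded complex whose terms are either terms of $Y$ or $0$, hence $Y(n)\in C(\mathcal{X}^{*})$, the transition maps $Y(n+1)\twoheadrightarrow Y(n)$ are the canonical epimorphisms, and $\underleftarrow{lim}\,Y(n)=Y$ since in each fixed degree the tower is eventually constant.

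Next I would apply Theorem \ref{q} to each $Y(n)$ to obtain a monic exact C($\mathcal{X}$-f.g.injective) preenvelope $0\rightarrow Y(n)\rightarrow E(n)\rightarrow C(n)\rightarrow 0$ with cokernel $C(n)\in C(\mathcal{X}^{*})$. Using that each $E(n)$ is C($\mathcal{X}$-f.g.injective) and that the composite $Y(n+1)\rightarrow Y(n)\rightarrow E(n)$ lifts through the preenvelope $Y(n+1)\rightarrow E(n+1)$, I would organize the $E(n)$ (and hence the $C(n)$) into an inverse system over the same index set, with surjective transition maps inherited from those of the $Y(n)$. Passing to the inverse limit and invoking Theorem 1.5.13 in \cite{4} (as in the proof of Theorem \ref{tahire}) yields an exact sequence $0\rightarrow Y\rightarrow\underleftarrow{lim}\,E(n)\rightarrow\underleftarrow{lim}\,C(n)\rightarrow 0$; the cokernel $\underleftarrow{lim}\,C(n)$ lies in $C(\mathcal{X}^{*})$ precisely because $C(\mathcal{X}^{*})$ is assumed closed under inverse limits, and $\underleftarrow{lim}\,E(n)$ is exact since an inverse limit of exact complexes with surjective transition maps is exact.

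It then remains to check that $L:=\underleftarrow{lim}\,E(n)$ is C($\mathcal{X}$-f.g.injective), i.e. $Ext^{1}(K,L)=0$ for every f.g. complex $K\in C(\mathcal{X}^{*})$, and this is the step I expect to be the main obstacle. Since $\mathcal{X}$ consists of finitely presented modules, one knows $Ext^{1}(K,E(n))=0$ for all $n$; the difficulty is transporting this vanishing through the inverse limit, because $Ext^{1}$ does not commute with $\underleftarrow{lim}$ in general and a $\underleftarrow{lim}{}^{1}$ term intervenes. The plan is to exploit the Mittag--Leffler condition coming from the surjectivity of the transition maps $E(n+1)\twoheadrightarrow E(n)$, which kills the relevant $\underleftarrow{lim}{}^{1}$ term on $Hom(K,E(n))$ and forces $Ext^{1}(K,L)\cong\underleftarrow{lim}\,Ext^{1}(K,E(n))=0$. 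Granting this, $0\rightarrow Y\rightarrow L\rightarrow\underleftarrow{lim}\,C(n)\rightarrow 0$ is the desired monic exact C($\mathcal{X}$-f.g.injective) preenvelope. Finally, for the last assertion, if $Y\in C(\mathcal{X}^{*})$ is itself $\mathcal{X}$-f.g.injective, then $\mathrm{id}_{Y}$ factors through its preenvelope $Y\rightarrow L$, so $Y$ is a direct summand of the exact complex $L$ and is therefore exact.
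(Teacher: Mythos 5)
Your skeleton is exactly the paper's: write $Y$ as the inverse limit of its brutal truncations $Y(n)$, apply Theorem \ref{q} to each $Y(n)$ to get a monic exact C($\mathcal{X}$-f.g.injective) preenvelope with cokernel in $C(\mathcal{X}^{*})$, pass to inverse limits via Theorem 1.5.13 of \cite{4} and the hypothesis that $C(\mathcal{X}^{*})$ is closed under inverse limits, and finish by proving $Ext^{1}(K,\underleftarrow{lim}\,E(n))=0$ for finitely generated $K\in C(\mathcal{X}^{*})$. You are in fact more explicit than the paper on two points it leaves silent: how the preenvelopes are organized into an inverse system, and the closing argument that an $\mathcal{X}$-f.g.injective complex is a direct summand of its exact preenvelope, hence exact. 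The paper disposes of the crux in one line, simply asserting $Ext^{1}(\frac{A}{B},\underleftarrow{lim}D(n))\cong \underleftarrow{lim}Ext^{1}(\frac{A}{B},D(n))=0$; you correctly identify this commutation as the real difficulty.

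However, your proposed resolution of that difficulty has a genuine gap, in two places. First, the transition maps $E(n+1)\rightarrow E(n)$ you obtain by lifting $Y(n+1)\rightarrow Y(n)\rightarrow E(n)$ through the preenvelope $Y(n+1)\rightarrow E(n+1)$ are arbitrary fillers; the preenvelope property gives no surjectivity, so "surjective transition maps inherited from those of the $Y(n)$" is unjustified. Second, and more seriously, even granting surjectivity of $E(n+1)\rightarrow E(n)$, the Mittag--Leffler condition you need is for the system $(Hom(K,E(n)))_{n}$, and $Hom(K,-)$ does not preserve epimorphisms; surjectivity upstairs does not by itself kill $\underleftarrow{lim}{}^{1}Hom(K,E(n))$. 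Both defects are repaired by working with the explicit preenvelopes of Lemma \ref{1.25} and Theorem \ref{q} rather than abstract ones: there $E(n)$ is a degreewise finite direct sum of disk complexes on the modules $E_{i}$, and the natural map $E(n+1)\rightarrow E(n)$ is the projection whose kernel $W(n)$ is a single disk $0\rightarrow E_{i}\rightarrow E_{i}\rightarrow 0$ on an $\mathcal{X}$-injective module. By the standard disk adjunction, $Ext^{1}(K,W(n))$ reduces to a module-level $Ext^{1}_{R}(K_{j},E_{i})$, where $K_{j}$ is either $0$ or lies in $\mathcal{X}$ (recall f.g. complexes are bounded), so it vanishes by $\mathcal{X}$-injectivity of $E_{i}$; hence each $Hom(K,E(n+1))\rightarrow Hom(K,E(n))$ is onto, the countable tower $(Hom(K,E(n)))_{n}$ has vanishing $\underleftarrow{lim}{}^{1}$, and applying $Hom(-,\underleftarrow{lim}\,E(n))$ to a projective presentation of $K$ embeds $Ext^{1}(K,\underleftarrow{lim}\,E(n))$ into that $\underleftarrow{lim}{}^{1}$, forcing it to be zero. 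With this supplement your argument is complete --- and it supplies precisely the justification that the paper's own one-line assertion omits.
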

\begin{proof} Let $Y: ... \rightarrow Y_{2} \rightarrow Y_{1} \rightarrow Y_{0} \rightarrow Y_{-1} \rightarrow
...$ and $Y(n): ... \rightarrow Y_{1} \rightarrow Y_{0} \rightarrow
... \rightarrow  Y_{-n} \rightarrow 0 \rightarrow ...$. Then
$\underleftarrow{lim}Y(n)=Y$.
\noindent By Theorem \ref{q}, $Y(n)$ has a
C($\mathcal{X}$-f.g.injective) preenvelope $D(n)$ such that $0
\rightarrow Y(n) \rightarrow D(n)$ is monic and $\frac{D(n)}{Y(n)}
\in C(\mathcal{X^{*}})$. So, $0 \rightarrow \underleftarrow{lim}Y(n)
\rightarrow \underleftarrow{lim}D(n)$ is monic with cokernel $\underleftarrow{lim}\frac{D(n)}{Y(n)} \in C(\mathcal{X^{*}})$ by Theorem 1.5.13 in \cite{4}.
\noindent Since \;$Ext^{1}(\frac{A}{B},\underleftarrow{lim}D(n))\cong
\underleftarrow{lim}Ext^{1}(\frac{A}{B},D(n))=0$ where $\frac{A}{B}$ is a f.g. complex in $C(\mathcal{X^{*}})$, $\underleftarrow{lim}D(n)$ is a
C($\mathcal{X}$-f.g.injective) preenvelope of Y.
\end{proof}
\begin{example}Let $\mathcal{X}$ be a class of $R$-modules closed under quotients, extensions and direct sums (so it is closed under direct limits since it is closed pure quotient) (for the existence of such classes, if $\mathcal{X}$ is a class of injective modules on a hereditary noetherian ring which is constructed in \cite{6}, then $\mathcal{X}$ is closed under quotients, extensions and direct limits and moreover if $\mathcal{X}$ is the class of min-injective modules and simple ideals of ring $R$ are projective, then it is closed under quotients, extensions and direct sums) and $A$ and $B$ are in $\mathcal{X}$ such that $\phi:A\rightarrow B$ is a homomorphism.  Then by Theorem 2.10 in \cite{1}, we have monic $\mathcal{X}-injective$-preenvelopes such that $f: A\rightarrow E_{A}$ and $g: B\rightarrow E_{B}$ with cokernels in $\mathcal{X}$. Then there exists a homomorphism $s:E_{A}\rightarrow E_{B} $ such that $g\phi =sf$. Using Lemma \ref{1.25} we can determine an exact C($\mathcal{X}-injective$)-preenvelope $E(1)$ of complex $Y(1)$ as follow:
\[\begin{diagram}
\node{Y(1):...0} \arrow{e} \arrow{s}\node{0} \arrow{e} \arrow{s} \node{A}
\arrow{e,t} {\phi}\arrow{s,r}{(f,0)} \node{B} \arrow{e} \arrow{s,r}{g} \node{0...} \\
\node{E(1):...0}  \arrow{e} \node{E_{A}}
\arrow{e,t} {\alpha} \node{E_{A}\oplus E_{B}} \arrow{e,t}{\beta} \node{E_{B}}\arrow{e} \node{0...} 
\end{diagram}\]
where $\alpha(x)=(x,-s(x))$ and $\beta(x,y)=s(x)+y$.
Then every left (right) bounded complex in $C(\mathcal{X^{*}})$ has a monic (epic) exact C(X-injective(pro-\\
jective)) preenvelope(precover) by Theorem \ref{tahire}. Moreover  if $C(\mathcal{X^{*}})$ is closed under inverse limit, by Theorem \ref{ali} every complex has a monic exact C($\mathcal{X}$-f.g.injective) preenvelope.
\end{example}
\begin{example}Let $\mathcal{X}$ be the class of finitely presented modules and $(\mathcal{X},\mathcal{X^{\bot}})$ be a cotorsion pair. Then $(\mathcal{X},\mathcal{X^{\bot}})$ is a complete cotorsion pair. If $C(\mathcal{X^{*}})$ is closed under inverse limits, then by Corollary \ref{gunes} every  complex has a monic (epic) exact C($\mathcal{X}$-injective(projective)) preenvelope (which is in $\varepsilon_{\mathcal{X}-injective(projective)}$) where $\mathcal{X}-inj$ and $\mathcal{X}-proj\subseteq \mathcal{X}$. Moreover by Theorem \ref{tahire} every
left (right) bounded complex in C($\mathcal{X}^{*}$) has a monic (epic)$\varepsilon_{\mathcal{X}-injective(projective)}$-preenvelope (precover) and by Theorem 3.10 every complex in $C(\mathcal{X^{*}})$ has a monic exact C(X-f.g.injective) preenvelope.
\end{example}
\begin{thm} \label{1.29} A complex $X$ is contained in a minimial $\mathcal{X}-injective$ complex $X'$.
\end{thm}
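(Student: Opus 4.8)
The plan is to mimic the classical Eckmann--Schopf construction of an injective envelope, but relative to the class of $\mathcal{X}$-injective complexes. The first step is to trap $X$ inside a single $\mathcal{X}$-injective complex. Every complex has an injective envelope $E(X)$ (this is already used in Lemma~\ref{1.27}), and an injective complex $E$ satisfies $Ext^{1}(C,E)=0$ for every complex $C$; in particular $Ext^{1}(Y/X,E(X))=0$ for all $Y/X\in C(\mathcal{X}^{*})$, so $E(X)$ is $\mathcal{X}$-injective. Hence the family
\[
\mathcal{P}=\{\,Y:\ X\subseteq Y\subseteq E(X)\ \text{and}\ Y\ \text{is}\ \mathcal{X}\text{-injective}\,\}
\]
is nonempty, as it contains $E(X)$.

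I would then order $\mathcal{P}$ by reverse inclusion and invoke Zorn's Lemma to extract an element $X'$ that is minimal with respect to inclusion. By construction $X'$ is $\mathcal{X}$-injective and contains $X$, and any $\mathcal{X}$-injective complex $Z$ with $X\subseteq Z\subseteq X'$ lies in $\mathcal{P}$, hence coincides with $X'$; this is precisely the assertion that $X'$ is a minimal $\mathcal{X}$-injective complex containing $X$. Two features keep the set-up under control: since $X\hookrightarrow E(X)$ is essential, every $Y\in\mathcal{P}$ is an essential extension of $X$ and $E(X)$ is an essential extension of $Y$, so the entire poset is squeezed between $X$ and $E(X)$; and the ambient injectivity of $E(X)$ allows maps to be extended freely into $E(X)$.

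The hard part is verifying the chain hypothesis of Zorn's Lemma, namely that a descending chain $\{Y_{i}\}\subseteq\mathcal{P}$ admits a lower bound in $\mathcal{P}$. The natural candidate is $Y=\bigcap_{i}Y_{i}$, which still contains $X$; what must be checked is that $Y$ is again $\mathcal{X}$-injective. This is the genuine obstacle, since for a short exact sequence $0\to A\to B\to B/A\to 0$ with $B/A\in C(\mathcal{X}^{*})$ and a chain map $\alpha:A\to Y$, each $Y_{i}$ supplies an extension $\beta_{i}:B\to Y_{i}$, but these need not agree and so need not factor through $Y$. I would attack this by first extending $\alpha$ to a single $\beta:B\to E(X)$ using the injectivity of $E(X)$, and then exploiting essentiality of $X$ in $E(X)$ together with the $\mathcal{X}$-injectivity of each $Y_{i}$ to show that the defect $\beta-\beta_{i}$, which factors through $B/A$, can be absorbed so that $\beta$ may be corrected to a map into every $Y_{i}$ simultaneously, and therefore into $Y$. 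Making this correction uniform along the chain is the crucial point, and it is here that one expects to need $\mathcal{X}$ to be well behaved (for instance extension closed, as in Theorem~\ref{t}); this is the step on which I would concentrate the effort.
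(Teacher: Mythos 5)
Your set-up is the same as the paper's: embed $X$ in its injective envelope $E(X)$ (which is $\mathcal{X}$-injective because it is injective), form the poset of $\mathcal{X}$-injective complexes between $X$ and $E(X)$, and apply Zorn's Lemma, so that the entire proof reduces to one claim: the intersection $Y=\bigcap_i Y_i$ of a descending chain in this poset is again $\mathcal{X}$-injective. But that is precisely the step you do not carry out. You only sketch a strategy (extend $\alpha\colon A\to Y$ to $\beta\colon B\to E(X)$ by injectivity of $E(X)$, then try to ``absorb'' the defects $\beta-\beta_i$, which factor through $B/A$, uniformly over all $i$), you acknowledge that making this correction uniform along the chain is the unresolved crux, and you even suggest that extra hypotheses on $\mathcal{X}$ (such as extension closure, as in Theorem \ref{t}) may be needed --- a hypothesis the theorem does not grant. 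As written, the proposal is an honest reduction to the hard step, not a proof of it; moreover the particular correction scheme you outline has no evident reason to converge, since the individual extensions $\beta_i\colon B\to Y_i$ need not be compatible in any way along the chain.

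For comparison, the paper closes exactly this gap, and does so without any added hypothesis on $\mathcal{X}$, by a pushout argument rather than a correction argument: given a test extension $0\to\bigcap A_\alpha\to Y\to C\to 0$ with $C\in C(\mathcal{X}^{*})$, it pushes out along each inclusion $\bigcap A_\alpha\hookrightarrow A_\alpha$ to obtain $0\to A_\alpha\to B_\alpha\to C\to 0$, which splits because $A_\alpha$ is $\mathcal{X}$-injective; intersecting these yields a split extension $0\to\bigcap A_\alpha\to\bigcap B_\alpha\to C\to 0$, and the five lemma applied to the comparison map $\phi\colon Y\to\bigcap B_\alpha$ shows that the original sequence was already split, i.e.\ $Ext^{1}(C,\bigcap A_\alpha)=0$. (One may quibble about the rigor of forming $\bigcap B_\alpha$, but that is the paper's route, and it replaces your ``simultaneous correction'' problem by a per-element splitting that is then assembled.) Your proposal would only become a proof after supplying an argument of this kind at the chain step.
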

\begin{proof} We know that every complex has an
injective envelope. Let $S= \{A: X \subseteq A \subseteq E$ $and$
$A$ $\mathcal{X}- injective$ $complex\} \neq \emptyset$ and $S'$ be
a descending chain of $S$. We will show that $\cap_{A_{\alpha} \in
S'}\{A_{\alpha}: \alpha \in I\}$ is an $\mathcal{X}-injective$
complex. Using this pushout diagram we have the following diagram
where $C$ with $\mathcal{X}^{*}$-complex,
\[\begin{diagram}
\node{0} \arrow{e} \node{\cap A_{\alpha}} \arrow{e,t} {\beta}
\arrow{s,r} {\theta_{\alpha}} \node{Y} \arrow{e} \arrow{s,r} {\phi}
\node{C} \arrow{e}
\arrow{s} \node{0} \\
\node{0} \arrow{e} \node{A_{\alpha}} \arrow{e,t} {\gamma_{\alpha}}
\node{B_{\alpha}} \arrow{e} \node{C} \arrow{e}
\node{0} 
\end{diagram}\]
\noindent Then the bottom row is split exact. Therefore $0 \longrightarrow
\cap A_{\alpha}  \longrightarrow \cap B_{\alpha}$\\
$ \longrightarrow C\longrightarrow 0$ is split exact. We have the following diagram;
\[\begin{diagram}
\node{0} \arrow{e} \node{\cap A_{\alpha}} \arrow{e,t} {\beta}
\arrow{s} \node{Y} \arrow{e} \arrow{s,r} {\phi} \node{C} \arrow{e}
\arrow{s} \node{0} \\
\node{0} \arrow{e} \node{\cap A_{\alpha}} \arrow{e,t} {\gamma}
\node{\cap B_{\alpha}} \arrow{e} \node{C} \arrow{e}
\node{0} 
\end{diagram}\]
\noindent By five lemma $\phi$ is an isomorphism. Therefore $0\longrightarrow \cap A_{\alpha}  \longrightarrow Y$\\
$ \longrightarrow C \longrightarrow 0$ is split exact. So $S$ has a minimal element,
say $X'$.
\end{proof}


\end{document}